\def\etat{\psi}
\def\dd{d}
\def\FR{\mathcal F}
\def\XR{\mathcal X}
\def\HR{\mathcal H}
\def\TR{\mathcal T}
\def\RR{\mathbb R}
\def\NN{\mathbb N}
\def\CC{\mathbb C}
\def\OO{\mathcal O}
\def\Id{I}
\def\cont{u}
\def\vit{G}
\def\cste{C_c}
\def\cs{C_s}
\def\mes{y}
\newcommand{\intg}{\mathfrak{G}}
\newcommand{\xmin}{x_{\min}}
\newcommand{\xmax}{x_{\max}}
\renewcommand{\Im}{\textup{\texttt{Im}}}
\newcommand{\argmin}{\textup{\texttt{argmin}}}
\renewcommand{\leq}{\leqslant}
\renewcommand{\geq}{\geqslant}
\newcommand{\regis}{\textsuperscript{\mbox{\scriptsize{\textregistered}}} }
\newcommand{\regisp}{\textsuperscript{\mbox{\scriptsize{\textregistered}}}}
\newcommand{\fbrm}{FBRM\regis}
\newcommand{\fbrmp}{FBRM\regisp}
\newtheorem{theorem}{Theorem}[section]
\newtheorem{proposition}[theorem]{Proposition}
\newtheorem{remark}{Remark}
\def\startlucas{\begingroup} 
\def\stoplucas{\endgroup}
\title{New dynamical observer for a batch crystallization process based on solute concentration}
\date{\today}
\author[1]{Lucas Brivadis}
\author[1]{Vincent Andrieu}
\author[1]{Élodie Chabanon}
\author[1]{Émilie Gagnière}
\author[1]{Noureddine Lebaz}
\author[1]{Ulysse Serres}
\affil[1]{Univ Lyon, Universit\'e Claude Bernard Lyon 1, CNRS, LAGEPP UMR 5007, 43 boulevard du 11 novembre 1918, F-69100, Villeurbanne, France}
\begin{document}

\maketitle

\begin{abstract}

In this paper a new observer is introduced to estimate the Crystal Size Distribution (CSD) only from the measurements of the solute concentration, temperature
\startlucas
and a model of the growth rate.
No model of the nucleation rate is needed.
This approach is based on the use of a Kazantzis-Kravaris/Luenberger observer which exponentially estimates functionals of the CSD.
Then, the full state is estimated by means of a Tikhonov regularization procedure.
Numerical simulations are provided.
Our approach relies on an infinite-dimensional observer, contrarily to the usual moment based observers.
\stoplucas

\end{abstract}

{\small
Keywords:
Observers, Crystallization,
\startlucas
Crystal Size Distribution,
\stoplucas
Tikhonov regularization, Solute concentration
}


\setlength{\baselineskip}{1.6\baselineskip}
\section{Introduction}

Crystallization is one of the oldest and major processes used in industry (chemical, pharmaceutical, food, etc.) to produce, purify or separate solid compounds or products \cite{kleinboistelle}.
This unit operation aims to produce solid crystals with well defined specifications including (among others) the Crytal Size Distribution (CSD) which is of critical importance.
At the industrial scale, the CSD is neither well controlled nor monitored during the crystallization process and a grinding step is usually performed before delivering the final product.
Hence,
a key point is the real-time control and the ``\emph{online}" monitoring of the CSD during the crystallization operation in order to avoid the grinding step. 
Unfortunately,
the CSD is nowadays not directly measurable in real-time by existing sensors.
Nevertheless,
the Process Analytical Technologies (PATs) allow us to get access to real time information such as the solute concentration based on the Attenuated Total Reflectance Fourier Transform InfraRed spectroscopy (ATR-FTIR) and the Chord Length Distribution based on the Focused Beam Reflectance Measurement (\fbrmp).
\startlucas 
 \fbrm is commonly employed in pharmaceutical industries to detect online some process deviations.
 Note however that its use for online estimation of the CSD remains challenging \cite{Poblete,scheler2013ray}.

Obtaining
an online
CSD estimation from the solute concentration, temperature and a growth rate model only is an interesting problem which is the purpose of the present paper.
To obtain online state estimation for dynamical systems from measured outputs, control engineers usually employ asymptotic state  observers.
Designing state observer for complex dynamical systems is an active research area. Some studies are devoted to nonlinear dynamics (see \cite{Bernard_Book_2019} and  references therein). Others consider infinite dimensional systems (see for instance \cite{TW}).

Designing observers for a batch crystallization process has been addressed in recent years by several researchers (see for instance \cite{mesbah2011real, mesbah2011comparison,motz2008state, nagy2003robust, porru2017monitoring, Uccheddu, vissers2012model}). 
These studies are based 
on spatial discretization of the PDE (see \cite{porru2017monitoring} or \cite{vissers2012model})
or more frequently on moments analysis (see survey \cite{mesbah2011comparison}).
However, the moment based approaches suffer from several drawbacks: (i) the numerical moment values exhibit a very large difference in their order of magnitude \cite{Thevenin}. Consequently, a small numerical/experimental error has a significant impact on the moments estimation. The error will then be propagated during the computation of the moment transport equations \cite{Jakobsen};
(ii) the recovering of the CSD from a finite number of its moments is still an open area of research in mathematics and highly dependant of the moments quality \cite{Lebaz, Rohani}.
\stoplucas
\startlucas
Furthermore, the
\stoplucas
moment based observers depend on the knowledge of the nucleation rate which may be tricky to model.
\startlucas
All these
\stoplucas
reasons may explain the difficulty to develop 
an efficient algorithm following this route.


In this paper,
another approach is adopted to describe the CSD without using its moments.
Moreover,
a key point of the proposed approach is that no information is needed on the nucleation rate.

In the first part of the paper, we recall the \startlucas Kazantzis-Kravaris/Luenberger \stoplucas methodology for observer design which has been introduced in its original form in \cite{luenberger1964observing} for linear systems and adapted for nonlinear dynamics in \startlucas
\cite{andrieu2006existence}  and \stoplucas \cite{kazantzis1998nonlinear}.
\startlucas
Section~\ref{secappli}
\stoplucas
is devoted to the modeling of the batch crystallization process.
Finally some simulation results are presented
\startlucas
in Section~\ref{secsimu}
\stoplucas
which highlight the practical interest of the suggested methodology.

\section{Kazantzis-Kravaris/Luenberger observer methodology}
\label{secluen}

\startlucas
In control engineering, the algorithm which is employed to reconstruct online missing data on a partially measured dynamical process is named an observer.
Observer design for nonlinear dynamical systems is a very active research area and has been the subject of numerous studies in the past 40 years (see \cite{Bernard_Book_2019} for a recent survey on this topic).
\stoplucas
One very efficient way to design observers for finite-dimensional linear as well as nonlinear dynamical systems 
\startlucas is the Kazantzis-Karavis/Lunberger (KKL) observer methodology. It
\stoplucas
is an approach which follows Luenberger original idea in \cite{luenberger1964observing} where asymptotic observers for linear systems were introduced for the first time.
The approach of \cite{luenberger1964observing} is somehow different from the way \textit{Luenberger observers} are introduced nowadays (which follows Luenberger second paper on observers \cite{luenberger1971introduction}). 
It has been recently employed by Kazantzis and Kravaris in \cite{kazantzis1998nonlinear} to design local observers for finite-dimensional nonlinear dynamics based on Luenberger original idea in a local version. We hope it may also be fruitful in the context of distributed parameter systems.
In this section we adapt to the context of linear abstract Cauchy problem the \startlucas KKL \stoplucas observer methodology which
is a two-steps design procedure.
In the first step we consider the problem of estimating a function of the state.
The reconstruction of the overall state of the system is obtained in the second step by inverting this function.

\subsection{Step 1: reconstruction of a function of the state}

As shown in \cite{andrieu2006existence} and \cite{bernard2019luenberger}, it is always possible to exponentially estimate a function of the state, even for nonlinear finite-dimensional systems,
\startlucas
that will carry enough information about the state to estimate it in Step 2.
\stoplucas
In order to do so,
it is sufficient to introduce an auxiliary dynamical system fed by the measured output such that its solutions provide an estimation of this function of the state.

\startlucas
Let $\RR$ (resp. $\RR_+$, $\RR_-$ and $\RR^*$) denotes the set of real numbers (resp. non-negative, non-positive and non-zero real numbers).
Let $\XR$ be a Hilbert space. We denote $\mathcal L (\XR)$ the set of all endomorphisms of $\XR$ and $\mathcal L (\XR, \RR)$ the space of linear forms from $\XR$ to $\RR$. Consider the abstract Cauchy problem on $\XR$
\stoplucas
\begin{equation}\label{eq_AbsCauchyProb}
\dot \psi = \FR \psi, \quad\psi(0) = \psi_0,   
\end{equation}
where
$\FR: D(\FR)\subset\XR\rightarrow \XR$ is a linear operator which is the generator of a strongly continuous semi-group denoted $(T_t)_{t\in\RR_+}$ in $\mathcal L (\XR)$
\startlucas
and $\psi_0\in D(\FR)$.
\stoplucas
Let $\rho(\FR) = \{\lambda \in \CC \mid (\FR-\lambda \Id)^{-1}\in \mathcal L (\XR)\}$ denote  the resolvent set of $\FR$.
Moreover, consider a bounded output operator
\begin{equation}\label{eq_OutputAbstract}
y = \HR \psi    ,
\end{equation}
\startlucas
where $\HR\in\mathcal L (\XR, \RR)$ is bounded.
\stoplucas
Following the \startlucas KKL 
\stoplucas
methodology, we obtain the following proposition.
\begin{proposition}\label{propobs}
For all $\lambda$ in $\rho(\FR)\cap \startlucas\RR_-^*\stoplucas$, let $\TR_\lambda$ in $\mathcal L(\XR,\RR)$ be the operator defined as
$$
\TR_{\lambda}: \XR\ni \psi \mapsto \HR(\FR-\lambda \Id)^{-1}\psi\in \RR\ .
$$
Then, the dynamical system
\begin{equation}\label{eq_Obs}
\dot z_\lambda = \lambda z_\lambda + y,
\end{equation}
is an
\startlucas
exponential
\stoplucas
observer for $\TR_{\lambda}\psi$. More precisely, for all $(\psi_0,z_0)$ in $D(\FR)\times\RR$, it yields for all $t\geq0$
\startlucas
\begin{align}
\TR_{\lambda}(T_t \psi_0)- z_\lambda(t) = \exp(\lambda t)\left(\TR_{\lambda}(\psi_0)- z_0\right).
\label{eq_vitconv}
\end{align}
\stoplucas
where $z_\lambda:\RR_+\rightarrow\RR$ is the solution of system (\ref{eq_Obs}) when $y$ is given by (\ref{eq_OutputAbstract}) and initiated from $z_0$.
\end{proposition}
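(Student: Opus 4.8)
The plan is to avoid any direct manipulation of the variation-of-constants formula for \eqref{eq_Obs} and instead to show that the target quantity $t\mapsto\TR_\lambda(T_t\psi_0)$ and the observer output $t\mapsto z_\lambda(t)$ solve \emph{the same} scalar linear ODE $\dot w=\lambda w+y(t)$, where $y(t)=\HR(T_t\psi_0)$. Once this is established, the difference $e(t):=\TR_\lambda(T_t\psi_0)-z_\lambda(t)$ obeys $\dot e=\lambda e$, hence $e(t)=\exp(\lambda t)\,e(0)$, and since $e(0)=\TR_\lambda(\psi_0)-z_0$ this is precisely \eqref{eq_vitconv}. The restriction $\lambda\in\RR_-^*$ then makes $\exp(\lambda t)\to0$ exponentially, so \eqref{eq_Obs} deserves the name exponential observer.

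First I would collect the functional-analytic prerequisites. Since $\psi_0\in D(\FR)$ and $\FR$ generates the strongly continuous semigroup $(T_t)$, the orbit $t\mapsto T_t\psi_0$ is a classical solution: it lies in $C^1(\RR_+;\XR)$, satisfies $T_t\psi_0\in D(\FR)$ for every $t\geq0$, and $\frac{\dd}{\dd t}T_t\psi_0=\FR T_t\psi_0$. Because $\lambda\in\rho(\FR)$, the resolvent $(\FR-\lambda\Id)^{-1}$ belongs to $\mathcal L(\XR)$, so $\TR_\lambda=\HR(\FR-\lambda\Id)^{-1}\in\mathcal L(\XR,\RR)$ is bounded; being bounded and linear it may be pulled through the time derivative, giving $\frac{\dd}{\dd t}\TR_\lambda(T_t\psi_0)=\TR_\lambda(\FR T_t\psi_0)$. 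Also $y(\cdot)=\HR(T_\cdot\psi_0)$ is continuous, so \eqref{eq_Obs} admits a unique absolutely continuous solution for each $z_0$.

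The only genuinely algebraic point is the resolvent identity valid on $D(\FR)$: for $\phi\in D(\FR)$, decompose $\FR\phi=(\FR-\lambda\Id)\phi+\lambda\phi$ and apply $(\FR-\lambda\Id)^{-1}$ to get $(\FR-\lambda\Id)^{-1}\FR\phi=\phi+\lambda(\FR-\lambda\Id)^{-1}\phi$. Taking $\phi=T_t\psi_0\in D(\FR)$ and composing with $\HR$ yields
\begin{equation*}
\frac{\dd}{\dd t}\TR_\lambda(T_t\psi_0)=\HR(T_t\psi_0)+\lambda\,\HR(\FR-\lambda\Id)^{-1}(T_t\psi_0)=y(t)+\lambda\,\TR_\lambda(T_t\psi_0),
\end{equation*}
which is exactly the ODE \eqref{eq_Obs} also satisfied by $z_\lambda$. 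Subtracting the two and integrating the resulting homogeneous equation $\dot e=\lambda e$ finishes the argument.

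The main obstacle, such as it is, is purely the operator-theoretic bookkeeping of the second paragraph: one must genuinely use $\psi_0\in D(\FR)$ (so that the orbit stays in $D(\FR)$ and the trajectory is classical rather than merely mild), $\lambda\in\rho(\FR)$ (so that $\TR_\lambda$ is bounded and commutes with $\frac{\dd}{\dd t}$), and the fact that $(\FR-\lambda\Id)^{-1}\FR\phi=\phi+\lambda(\FR-\lambda\Id)^{-1}\phi$ holds only on $D(\FR)$ — all of which are met in the present setting. Beyond that the computation is the elementary scalar one displayed above, and no estimate or fixed-point argument is required.
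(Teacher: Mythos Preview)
Your argument is correct and coincides with the paper's own proof: both compute the time derivative of $\TR_\lambda(T_t\psi_0)-z_\lambda(t)$, reduce it to $\lambda$ times itself via the identity $\TR_\lambda(\FR-\lambda\Id)=\HR$ (equivalently your resolvent identity $(\FR-\lambda\Id)^{-1}\FR\phi=\phi+\lambda(\FR-\lambda\Id)^{-1}\phi$ on $D(\FR)$), and integrate. Your write-up is in fact more careful about the operator-theoretic prerequisites (classical orbit for $\psi_0\in D(\FR)$, boundedness of $\TR_\lambda$ allowing differentiation under the operator), which the paper leaves implicit.
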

\begin{proof}
Let $\psi_0$ be in $D(\FR)$.
Equations \eqref{eq_AbsCauchyProb}--\eqref{eq_Obs} yield
\begin{align*}
\frac{d}{dt}\big(\TR_{\lambda}(T_t \psi_0)- z_\lambda(t)\big)
&=  \TR_{\lambda}(\FR T_t \psi_0) - \lambda z_\lambda(t) - \HR T_t \psi_0
\\
&=  \TR_{\lambda}(\FR - \lambda \Id) T_t \psi_0  +  \lambda \big(\TR_{\lambda}(T_t \psi_0)  -  z_\lambda(t) \big) - \HR T_t \psi_0
\\
&= \lambda (\TR_{\lambda}(T_t\psi_0) -  z_\lambda(t)),
\end{align*}
where the last equality follows since $\TR_{\lambda}(\FR - \lambda \Id) = \HR$.
\startlucas
Hence, \eqref{eq_vitconv} follows by integrating in time the former equation.
\stoplucas

\startlucas
Keeping in mind that $\lambda$ is negative in Proposition~\ref{propobs}, \eqref{eq_vitconv} implies
\begin{equation}\label{eq_Conv}
\lim_{t\rightarrow +\infty} \left|\TR_{\lambda}(T_t \psi_0)- z_\lambda(t)\right| = 0 .
\end{equation}
This ends the proof.
\stoplucas
\end{proof}
\startlucas
\begin{remark}
The operator $\TR_{\lambda}$ is solution to the Sylvester equation~:
\begin{equation}
\FR\TR_{\lambda} = \lambda \TR_\lambda + \HR .
\end{equation}
We recognize here the algebraic equation which was already given in Luenberger seminal paper \cite{luenberger1964observing} and which becomes a nonlinear partial differential equation in \cite{andrieu2006existence}.
\end{remark}
\stoplucas
\subsection{Step 2: reconstruction of the entire state of the system}
According to step 1, we can easily estimate $\TR_{\lambda}\psi$ for all $\lambda$ in $\rho(\FR)\cap \startlucas\RR_-^*\stoplucas$ via the observer system (\ref{eq_Obs}).
The idea of the \startlucas KKL 
\stoplucas
observer methodology is to consider the mapping
\(
\TR:\XR\mapsto \RR^p
\)
given by $\psi \mapsto (\TR_{\lambda_1} \psi, \dots, \TR_{\lambda_p} \psi)$
which will be
\startlucas
exponentially
\stoplucas
estimated along the trajectory of \eqref{eq_AbsCauchyProb} via a bench of observers of the form (\ref{eq_Obs}).
To solve the estimation problem, the question is to solve the inverse problem 
\begin{equation}\label{E:equality-constraint}
\TR \hat \psi = z
\end{equation}
with the unknown  $\hat \psi$ in $\XR$.
\startlucas
Let $\Im\TR=\{\TR x\mid x\in\XR\}$ be the image of $\TR$.
Then \eqref{E:equality-constraint} admits a solution only if $z$ is in $\Im\TR$.
\stoplucas
The former condition is in general too restrictive due to the fact that $z$ is only an estimation of $\TR \psi$ and consequently may not be in $\Im\TR$.
A solution to overcome this problem is to replace the equality constraint \eqref{E:equality-constraint} by the minimization problem
\begin{equation}\label{pb:min_ill}
\textrm{minimize~}\|\TR \psi - z\|^2 \textrm{~subject to~} \psi \in \XR.
\end{equation}
The set of solutions of \eqref{pb:min_ill} is denoted by
\(
\argmin_{\psi \in \XR}\|\TR \psi - z\|^2\).
The following results can be found in \cite[Chapter 4]{kern2016methodes}:
\begin{itemize}
    \item If $\TR$ is injective, then \eqref{pb:min_ill} has at most one solution.
    \item If $z\in \Im\TR \oplus \left(\Im\TR\right)^\perp$, then the set $\argmin_{\psi \in \XR}\|\TR \psi - z\|^2$ is closed, convex and non-empty (in particular \eqref{pb:min_ill} admits at least one solution).
    \item If $\TR$ is bijective and admits a left inverse denoted $\TR^{-1}$ then the unique solution of \eqref{pb:min_ill} is $\hat \psi(t)=\TR^{-1}z(t)$.
\end{itemize}

For finite dimensional systems, the injectivity of $\TR$ is directly linked with an observability property of the dynamical system \eqref{eq_AbsCauchyProb}.
Indeed, following \cite{luenberger1964observing}, it can be shown that for finite dimensional systems, 
if the pair $(\FR,\HR)$ is observable (in the sense that the Kalman observability matrix is full rank), then picking $p$ pairwise distinct $\lambda_i$'s with $p = \dim \XR$, then the obtained mapping $\TR$ is invertible.
In that case, a \startlucas KKL 
\stoplucas
 observer can simply be obtained as
$$
\dot z =\begin{bmatrix}\lambda_1 & & \\
&\ddots&\\
&&\lambda_p\end{bmatrix}  z +\begin{bmatrix}1\\\vdots\\1\end{bmatrix} y\ ,\quad \hat \psi(t) = \TR^{-1} z(t).
$$

In the infinite dimensional case, there is no hope that a finite number of $\lambda_i$'s can give all the information allowing to reconstruct the entire system's state. It is an open question to know if observability properties of the infinite dimensional system allows to obtain that a certain countable set of $\lambda_i$'s gives enough information.
In that case, the minimization problem \eqref{pb:min_ill} becomes ill-posed.
\startlucas
In other words, it may have no solution, or numerous solutions, and its solutions may depend on $z$ in a non-continuous manner.
In our case, $z$ is an estimation of $\TR\psi$. Hence, the minimizer $\hat\psi$ of \eqref{pb:min_ill} may be very different of the real state $\psi$.
A typical approach to overcome this problem is to consider a Tikhonov regularization method in which the optimization problem \eqref{pb:min_ill} is slightly modified.
This new minimization problem will have solutions close to the former one, and be well-posed.
We recall the next proposition on which this method is based for the convenience of the reader.
\stoplucas
\begin{proposition}[\cite{kern2016methodes}, Proposition \startlucas 6\stoplucas.1]
Given $\TR$ in $\mathcal L(\XR,\RR^p)$ and $\delta>0$, 
the minimization problem
\begin{equation}\label{pb:min_well}
\textrm{minimize~} \|\TR \psi - z\|^2+ \delta \|\psi \|^2 \textrm{~subject to~} \psi \in \XR
\end{equation}
admits a unique solution and is a continuous function of $z$ in $\RR^p$.
\end{proposition}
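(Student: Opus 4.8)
The plan is to turn the variational problem~\eqref{pb:min_well} into an explicitly solvable linear equation, exploiting that the objective is a strictly convex, coercive quadratic form on the Hilbert space $\XR$. First I would set $J_\delta:\XR\to\RR_+$, $J_\delta(\psi)=\|\TR\psi-z\|^2+\delta\|\psi\|^2$, and record the exact identity
\[
J_\delta(\psi)=J_\delta(\phi)+\langle 2(\TR^*\TR+\delta\Id)\phi-2\TR^*z,\ \psi-\phi\rangle+\|\TR(\psi-\phi)\|^2+\delta\|\psi-\phi\|^2,
\]
valid for all $\psi,\phi\in\XR$, where $\TR^*\in\mathcal L(\RR^p,\XR)$ is the adjoint of $\TR$. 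This single identity does three things: it shows $J_\delta$ is Fréchet differentiable with gradient $\nabla J_\delta(\psi)=2(\TR^*\TR+\delta\Id)\psi-2\TR^*z$; it shows strict convexity, since the remainder $\|\TR(\psi-\phi)\|^2+\delta\|\psi-\phi\|^2\ge\delta\|\psi-\phi\|^2$ is strictly positive for $\psi\ne\phi$; and it shows coercivity, since $J_\delta(\psi)\ge\delta\|\psi\|^2\to+\infty$.

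Next I would use strict convexity to reduce the minimization to the first-order condition: $\psi_\delta$ solves~\eqref{pb:min_well} if and only if $\nabla J_\delta(\psi_\delta)=0$, i.e. if and only if it solves the normal equation
\[
(\TR^*\TR+\delta\Id)\psi_\delta=\TR^*z.
\]
The crux of the argument is to prove that $A_\delta:=\TR^*\TR+\delta\Id\in\mathcal L(\XR)$ is boundedly invertible. I would note that $A_\delta$ is self-adjoint and satisfies $\langle A_\delta\psi,\psi\rangle=\|\TR\psi\|^2+\delta\|\psi\|^2\ge\delta\|\psi\|^2$ for all $\psi$, hence is bounded below; a bounded-below self-adjoint operator is injective with closed range, and self-adjointness forces $\mathrm{ran}(A_\delta)=\ker(A_\delta)^\perp=\XR$, so $A_\delta$ is bijective and $A_\delta^{-1}\in\mathcal L(\XR)$ with $\|A_\delta^{-1}\|\le 1/\delta$ by the bounded inverse theorem. (Equivalently, one may quote the Lax--Milgram theorem for the coercive continuous bilinear form $(\psi,\phi)\mapsto\langle\TR\psi,\TR\phi\rangle+\delta\langle\psi,\phi\rangle$.) Together with the previous step this gives the unique minimizer $\psi_\delta=A_\delta^{-1}\TR^*z$.

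Finally, continuity in $z$ comes for free: $z\mapsto A_\delta^{-1}\TR^*z$ is a composition of bounded linear maps, hence bounded, linear and Lipschitz continuous from $\RR^p$ to $\XR$ (with constant $\|\TR^*\|/\delta$). I expect the only genuinely substantive step to be the invertibility of $A_\delta$; the rest is bookkeeping on a quadratic functional. It is worth stressing that this is precisely where $\delta>0$ is used: at $\delta=0$ the operator $\TR^*\TR$ generally fails to have closed range when $\XR$ is infinite-dimensional, which is exactly the ill-posedness of~\eqref{pb:min_ill} discussed above, whereas $\delta>0$ restores a uniform coercivity bound and with it well-posedness. (Here $\TR$ even has finite-dimensional range, so $\TR^*\TR$ is finite-rank and one could diagonalize it and invert $A_\delta$ by hand, but the abstract argument is cleaner.)
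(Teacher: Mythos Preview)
Your proof is correct and is exactly the standard argument for well-posedness of Tikhonov regularization: strict convexity and coercivity of the quadratic functional, the normal equation $(\TR^*\TR+\delta\Id)\psi_\delta=\TR^*z$, and bounded invertibility of $\TR^*\TR+\delta\Id$ via its uniform coercivity bound. Note, however, that the paper does not supply its own proof of this proposition: it is quoted verbatim from \cite{kern2016methodes} and left unproved, so there is no paper proof to compare against. Your argument is precisely the one found in standard references on inverse problems (including the cited one), so in that sense it matches the intended approach.
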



The choice of the \emph{regularization parameter} $\delta$ is of the uttermost importance. Indeed, as $\delta\to+\infty$, the solution of \eqref{pb:min_well} goes to zero. Conversely, as $\delta\to0$, the problem \eqref{pb:min_well} goes closer to the ill-posed problem \eqref{pb:min_ill}. A compromise shall be made. In practice, $\delta$ is chosen experimentally, and is linked to the measure confidence: the more uncertain is the output $y$, the bigger is $\delta$.

We also have the following theoretical result, that describes what happens when $\delta$ goes to zero.

\begin{theorem}[\protect{\cite[Theorem \startlucas 6\stoplucas.1]{kern2016methodes}}]
Let $\TR$ in $\mathcal L(\XR,\RR^p)$ and $z \in \Im \TR$. Let $\psi_0\in \XR$ and $\psi$ the solution of \eqref{pb:min_ill} closest to $\psi_0$. Let $(z_n)_{n\in \NN}$ be a sequence in $\RR^p$ converging to $z$.
Let $\varepsilon_n = |z_n-z|$. Let $(\delta_n)_{n\in \NN}$ be a sequence of regularization parameters converging to zero. For any $n\in \NN$, let $\psi_n$ be the solution of the problem \eqref{pb:min_well} associated to $z_n$ and $\delta_n$. Then,
\begin{itemize}
\item $\displaystyle|\TR \psi_n-z_n|\underset{n\rightarrow+\infty}{\longrightarrow}0$;
\item
if
\(\displaystyle
\frac{\varepsilon_n}{\delta_n}\underset{n\rightarrow+\infty}{\longrightarrow}0,
\)
then
\(|\TR\psi_n-z_n| = \OO(\varepsilon_n)\) and \(
\psi_n\underset{n\rightarrow+\infty}{\longrightarrow}\psi;
\)
\item
if
\(\displaystyle
\frac{\varepsilon_n}{\delta_n^2}\underset{n\rightarrow+\infty}{\longrightarrow}0
\)
and
\(
\psi\in\left(\Im\TR\right)^*,
\)
then
\(
|\TR\psi_n-z_n| = \OO(\varepsilon_n^2)
\)
and
\(
|\psi_n-\psi| = \OO(\varepsilon_n).
\)
%
\end{itemize}
\end{theorem}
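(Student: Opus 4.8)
The plan is to work from the explicit form of the regularized solution. The functional minimized in \eqref{pb:min_well} is strictly convex and quadratic, so by its first-order optimality condition the unique minimizer $\psi_n$ solves the normal equation $(\TR^*\TR+\delta_n\Id)\psi_n=\TR^*z_n$, that is $\psi_n=(\TR^*\TR+\delta_n\Id)^{-1}\TR^*z_n$ (the operator $\TR^*\TR+\delta_n\Id$ being boundedly invertible since $\TR^*\TR\ge0$ and $\delta_n>0$). Since $p$ is finite, $\Im\TR$ is closed in $\RR^p$, so for $z\in\Im\TR$ problem \eqref{pb:min_ill} is solvable; its solution set is an affine subspace parallel to $\ker\TR$, and I will regard $\psi$ as the minimal-norm solution, so that $\TR\psi=z$ and $\psi\perp\ker\TR$ (the reference point $\psi_0$ only amounts to a harmless recentring). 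The first item is then immediate: since $\psi_n$ minimizes the Tikhonov functional and $\psi$ is admissible,
\[
|\TR\psi_n-z_n|^2+\delta_n\|\psi_n\|^2\le|\TR\psi-z_n|^2+\delta_n\|\psi\|^2=\varepsilon_n^2+\delta_n\|\psi\|^2 .
\]
Discarding $\delta_n\|\psi_n\|^2\ge0$ gives $|\TR\psi_n-z_n|^2\le\varepsilon_n^2+\delta_n\|\psi\|^2\to0$, and discarding the residual gives $\|\psi_n\|^2\le\|\psi\|^2+\varepsilon_n^2/\delta_n$; hence, under $\varepsilon_n/\delta_n\to0$, the sequence $(\psi_n)$ is bounded in $\XR$ with $\limsup_n\|\psi_n\|\le\|\psi\|$.

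For the convergence $\psi_n\to\psi$ in the second item I would argue by weak compactness. From the bound above, every subsequence of $(\psi_n)$ has a further subsequence converging weakly to some $\psi_\infty$; since $\TR$ is bounded, $\TR\psi_n\to\TR\psi_\infty$ along it, while the first item together with $z_n\to z$ gives $\TR\psi_n\to z=\TR\psi$, so $\TR\psi_\infty=\TR\psi$ and $\psi_\infty$ is admissible for \eqref{pb:min_ill}. By weak lower semicontinuity of the norm and $\limsup\|\psi_n\|\le\|\psi\|$ one gets $\|\psi_\infty\|\le\|\psi\|$, whence $\psi_\infty=\psi$ by minimality; as the limit is the same along every subsequence, $\psi_n\rightharpoonup\psi$, and then $\|\psi\|\le\liminf\|\psi_n\|\le\limsup\|\psi_n\|\le\|\psi\|$ shows $\|\psi_n\|\to\|\psi\|$, which in a Hilbert space upgrades weak convergence to strong convergence. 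Alternatively, and perhaps more cleanly, one can estimate directly $\psi_n-\psi=(\TR^*\TR+\delta_n\Id)^{-1}\TR^*(z_n-z)-\delta_n(\TR^*\TR+\delta_n\Id)^{-1}\psi$: the first term has norm at most $\varepsilon_n/(2\sqrt{\delta_n})=\sqrt{\delta_n}\,\varepsilon_n/(2\delta_n)\to0$, and the second tends to $0$ by spectral calculus for $\TR^*\TR$ because $\psi\perp\ker\TR=\ker\TR^*\TR$. The rate $|\TR\psi_n-z_n|=\OO(\varepsilon_n)$ is then obtained by refining the comparison estimate, separating the data error from the contribution of the regularization term and using the hypothesis on $\varepsilon_n/\delta_n$.

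For the last item I would pass to the resolvent identity $\TR(\TR^*\TR+\delta\Id)^{-1}=(\TR\TR^*+\delta\Id)^{-1}\TR$, which gives $\TR\psi_n-z_n=-\delta_n(\TR\TR^*+\delta_n\Id)^{-1}z_n$, and then split $z_n=z+(z_n-z)$ with $z\in\Im\TR$. On $(\Im\TR)^\perp=\ker\TR^*=\ker(\TR\TR^*)$ the operator $\delta_n(\TR\TR^*+\delta_n\Id)^{-1}$ is the identity, so it only sees the orthogonal projection of $z_n-z$ there, of size at most $\varepsilon_n$; on $\Im\TR$ the nonzero eigenvalues of the matrix $\TR\TR^*$ are bounded below by some $\mu>0$, so that operator has norm at most $\delta_n/\mu$ there. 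The source condition $\psi\in(\Im\TR)^*$, i.e. $\psi=\TR^*w$ (so $z=\TR\TR^*w$), sharpens the estimate of the $\Im\TR$-part, and feeding this together with the hypothesis $\varepsilon_n/\delta_n^2\to0$ into the bounds for $|\TR\psi_n-z_n|$ and for $\psi_n-\psi$ yields the faster rates $\OO(\varepsilon_n^2)$ and $\OO(\varepsilon_n)$. The main obstacle is the convergence statement of the second item: the comparison inequality by itself delivers only boundedness and the one-sided norm bound, and turning a weak subsequential limit into a strong limit of the whole sequence is exactly where the characterization of $\psi$ as the minimal-norm admissible point and the Radon--Riesz property of Hilbert spaces enter; once that is in place, the rate estimates are routine spectral bookkeeping, the only care being to track which error terms carry powers of $\delta_n$ and which carry $\varepsilon_n$.
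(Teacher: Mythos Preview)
The paper does not prove this theorem at all: it is quoted from \cite[Theorem~6.1]{kern2016methodes} as a background result supporting the Tikhonov step of the observer, and no argument is supplied. There is therefore nothing in the paper to compare your proof against.

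As for your sketch itself, the overall architecture is the standard one and is sound: the comparison inequality gives the first item and the norm bound $\limsup\|\psi_n\|\le\|\psi\|$; weak sequential compactness together with the Radon--Riesz property (or, equivalently, the resolvent decomposition you write down) gives strong convergence in the second item; and the source condition $\psi=\TR^*w$ is indeed the mechanism that upgrades the rates in the third item. Two places are left genuinely vague. In the second item you assert $|\TR\psi_n-z_n|=\OO(\varepsilon_n)$ ``by refining the comparison estimate'', but from your own formula $\TR\psi_n-z_n=-\delta_n(\TR\TR^*+\delta_n\Id)^{-1}z_n$ one sees a contribution of order $\delta_n$ from the exact-data part $z\in\Im\TR$ plus $\OO(\varepsilon_n)$ from the perturbation, and under $\varepsilon_n=o(\delta_n)$ the $\delta_n$ term dominates; so either the stated rate has a typo in the cited source or a sharper argument than you indicate is required, and you should say which. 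In the third item you identify the right ingredients but stop short of the arithmetic: writing $\psi=\TR^*w$ gives $\delta_n(\TR^*\TR+\delta_n\Id)^{-1}\psi=\delta_n\TR^*(\TR\TR^*+\delta_n\Id)^{-1}w$, which is $\OO(\delta_n)$ in norm, and combining this with the data-error term $\OO(\varepsilon_n/\sqrt{\delta_n})$ under $\varepsilon_n/\delta_n^2\to0$ is what produces the claimed orders---you should carry that computation through rather than label it ``routine bookkeeping''.
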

\startlucas
To summarize, a possible observer design of a given abstract Cauchy problem in the form \eqref{eq_AbsCauchyProb} is then given by
\begin{equation}\label{eq_Obs_KKL_Tikhonov}
\left\{
\begin{aligned}
&\dot z =\begin{bmatrix}\lambda_1 & & \\
&\ddots&\\
&&\lambda_p\end{bmatrix}  z +\begin{bmatrix}1\\\vdots\\1\end{bmatrix} y
\\
&\hat \psi(t)
= \argmin_{\psi \in \XR}\left\{\|\TR \psi - z(t)\|^2
+ \delta \|\psi\|^2\right\}, \quad \delta >0
\\
&\TR
= \left(\TR_{\lambda_1}, \dots, \TR_{\lambda_p}\right), \quad \TR_{\lambda_i}  = \HR(\FR - \lambda_i \Id)^{-1}, \quad
i=1,\dots, p,
\end{aligned}\right.
\end{equation}
where the $\lambda_i$ are  pairwise distinct elements of $\rho(\FR)\cap \startlucas\RR_-^*\stoplucas$.

Note that despite the fact that this dynamical system is well-defined for all abstract Cauchy problem in the form \eqref{eq_AbsCauchyProb}, its convergence to the real state is not guaranteed \textit{a priori} and may be linked to observability properties.

\stoplucas

\section{Application to the crystallization process}\label{secappli}

\subsection{Modeling the batch crystallization process}

In this subsection, a dynamical model representing the batch crystallization process and the concentration measurement is given.

\subsubsection{Population balance equation}\label{secbilan}

In a first step a batch crystallization process is modeled.
\startlucas
We assume that the size of the crystals is described by a scalar parameter $x$ (in $m$).
For example, if the crystals are spherical, then $x$ may represent their diameter.
\stoplucas

Let us denote $\etat(t, \cdot)$
the Number Density Function in terms of the crystal size (NDF) at time $t$ \startlucas
(in $\mathrm{m}^{-1}.\mathrm{m}^{-3}$),
\stoplucas
so that
$\int_{x_1}^{x_2}\etat(t, x)dx$ is the total amount of crystals in the reactor at time $t$ with size between $x_1$ and $x_2$
\startlucas
(in $\mathrm{m}^{-3}$).
\stoplucas
Let  $[t_0, t_1]$ be the time window in which the crystallization process occurs.

We assume that the crystals never reach a specific maximal allowable size
denoted  $\xmax$ during the experiment.
Typically this size can simply be related to the size of the reactor in which the crystallization occurs:
\begin{equation}\label{eq_EtatMax}
\forall t \in [t_0, t_1],\quad \etat(t, \xmax) = 0.
\end{equation}
We assume that all the crystals appear at the same positive size $\xmin$,  and we denote $\cont(t)$ the appearance rate of new crystals at size $\xmin$ at time $t$.
\startlucas
The function $\cont$ quantifies the nucleation rate in the reactor.
In \cite{nagy2013recent}, for instance, an expression for the function $\cont$ is given as
\(
\cont(t) = {R_n(t)}/{\vit(t)} , 
\)
where $R_n$ is the overall rate of nucleation expressed in $\mathrm{s^{-1}.m^{-3}}$ and $G$ is the growth rate of the crystals in $\mathrm{m.s^{-1}}$.
Note however that in our approach for NDF observation, 
we don't need to know precisely this expression. We do not use any model of $u$, and assume this quantity to be unknown.
\stoplucas
We have
\startlucas
\begin{align}
\forall t \in [t_0, t_1],\quad \etat(t, \xmin) = \cont(t).
\end{align}
\stoplucas
Finally, let  $\vit(t, x) \geq 0$ be the growth rate of the crystals in $\mathrm{m.s^{-1}}$, in other words the rate at which a crystal of size $x$ grows at time $t$. 
The population balance
\startlucas
leads to
\stoplucas
\begin{equation}
\forall t \in (t_0, t_1),\quad \forall x\in (\xmin, \xmax),\quad \partial_t \etat(t, x) + \partial_x (\vit \etat)(t, x) = 0.
\label{trans1}
\end{equation}
By considering the McCabe assumption, we assume that $\vit$ does not depend on $x$. 
In that case equation \eqref{trans1} becomes
\begin{equation}
\forall t \in (t_0, t_1),\quad \forall x\in (\xmin, \xmax),\quad \partial_t\etat(t, x) + \vit(t)\partial_x\etat(t, x) = 0.
\label{trans}
\end{equation}
Equation (\ref{trans}) is a time-varying one-dimensional transport equation. 

We also assume that at the beginning of the experiment, some seed particles are in the reactor.
This yields
\begin{align*}
\forall x \in [\xmin, \xmax], \quad \etat(t_0, x) = \etat_0(x).
\end{align*}

To summarize, the model of the NDF in a batch crystallization process is:
\begin{equation}
\begin{aligned}
\begin{cases}
\partial_t \etat(t, x) = -\vit(t)\partial_x \etat(t, x) & \forall t \in (t_0, t_1), \ \forall x \in (\xmin, \xmax)\\
\etat(t_0, x) = \etat_0(x) & \forall x \in [\xmin, \xmax]\\
\etat(t, \xmin) = \cont(t) &\forall t \in [t_0, t_1]
\end{cases}
\end{aligned}
\label{systbilan}
\end{equation}
to which is added the other boundary condition in (\ref{eq_EtatMax}) which in our case is seen as a knowledge on the particular solution we wish to estimate.

The following theorem states that system \eqref{systbilan} of this model admits weak solution in $L^2$ space and strong solution in $H^1$.
\begin{theorem}\label{th:exist}
Let $t_1 > t_0 \geq 0,\ \xmax > \xmin\geq 0,\ \etat_0 \in H^1(\xmin, \xmax),\ \cont \in H^1(t_0, t_1),\ \vit \in C^0([t_0, t_1]; \RR^*_+)$. Assume that $\cont(t_0) = \etat_0(\xmin)$. Then system \eqref{systbilan} admits a unique solution
\begin{align*}
\etat \in C^0([t_0, t_1]; H^1(\xmin, \xmax)) \cap C^1([t_0, t_1]; L^2(\xmin, \xmax)).
\end{align*}
Moreover, for all $(t, x) \in [t_0, t_1] \times [\xmin, \xmax]$,
\begin{align}
\etat(t, x)=
\begin{cases}
\etat_0 (x - \intg(t)) &\text{ if } x - \xmin \geq \intg(t) \\
\cont \circ \intg^{-1} (\intg(t) - x + \xmin) &\text{ else.}
\end{cases}
\label{eqf}
\end{align}
where $\intg: [t_0, t_1] \ni t \mapsto \int_{t_0}^t\vit(\tau)\dd \tau$.

\end{theorem}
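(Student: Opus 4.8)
The plan is to proceed by the method of characteristics to obtain the explicit formula \eqref{eqf}, then verify \emph{a posteriori} that the candidate has the claimed regularity and solves \eqref{systbilan}, and finally argue uniqueness. Since $\vit$ is continuous and strictly positive on $[t_0,t_1]$, the map $\intg(t)=\int_{t_0}^t\vit(\tau)\,\dd\tau$ is $C^1$, strictly increasing, vanishes at $t_0$, and hence is a bijection from $[t_0,t_1]$ onto $[0,\intg(t_1)]$ with $C^1$ inverse (by the inverse function theorem, since $\intg'=\vit>0$). Along a characteristic curve $t\mapsto(t,x_0+\intg(t))$ the transport equation \eqref{trans} forces $\etat$ to be constant; tracing each point $(t,x)$ back to where its characteristic meets either the initial line $\{t=t_0\}$ or the inflow boundary $\{x=\xmin\}$ yields precisely the two cases in \eqref{eqf}, the dichotomy being governed by the sign of $x-\xmin-\intg(t)$.

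Next I would check regularity of the candidate $\etat$ defined by \eqref{eqf}. On each of the two open regions the formula is a composition of $H^1$ (resp.\ $C^1$) functions with $C^1$ functions, so the restrictions are clearly smooth enough; the crux is matching across the interface $\{x-\xmin=\intg(t)\}$. There the compatibility hypothesis $\cont(t_0)=\etat_0(\xmin)$ guarantees the two pieces agree in value, and one must further check that the traces glue to give an element of $C^0([t_0,t_1];H^1(\xmin,\xmax))\cap C^1([t_0,t_1];L^2(\xmin,\xmax))$. Concretely, for fixed $t$ the function $x\mapsto\etat(t,x)$ is continuous across the interface and piecewise-$H^1$, hence $H^1$; continuity in $t$ with values in $H^1$ and $C^1$-in-$t$ with values in $L^2$ follow from the continuity of translation in $L^2$ together with the $C^1$ dependence of $\intg$ and $\intg^{-1}$ on their arguments. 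Plugging \eqref{eqf} into \eqref{trans} on each region and checking the two boundary/initial conditions in \eqref{systbilan} then confirms it is a (weak, and under the stated regularity, strong) solution; note \eqref{eq_EtatMax} is \emph{not} imposed here, consistent with the remark following \eqref{systbilan}.

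For uniqueness, suppose $\etat_1,\etat_2$ are two solutions in the stated class and set $w=\etat_1-\etat_2$, which solves \eqref{trans} with zero initial data and zero inflow data. A standard energy estimate — differentiate $t\mapsto\tfrac12\int_{\xmin}^{\xmax}w(t,x)^2\,\dd x$ in time, use $\partial_t w=-\vit(t)\partial_x w$, integrate by parts in $x$, and bound the boundary term $-\tfrac{\vit(t)}{2}\big[w(t,\xmax)^2-w(t,\xmin)^2\big]=-\tfrac{\vit(t)}{2}w(t,\xmax)^2\le 0$ (the inflow term vanishes) — gives $\frac{d}{dt}\|w(t,\cdot)\|_{L^2}^2\le 0$, whence $w\equiv0$ by Grönwall. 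Alternatively, uniqueness is immediate once one knows every solution must coincide with its representation along characteristics.

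I expect the main obstacle to be the regularity bookkeeping at the interface $\{x-\xmin=\intg(t)\}$: showing that the two explicitly given pieces assemble into a genuine $C^0([t_0,t_1];H^1)\cap C^1([t_0,t_1];L^2)$ function, rather than merely a piecewise-smooth one, is where the compatibility condition $\cont(t_0)=\etat_0(\xmin)$ is essential and where a little care with the continuity of translations in $L^2$ and with the chain rule for Sobolev compositions is required. The characteristics computation and the energy estimate for uniqueness are routine by comparison.
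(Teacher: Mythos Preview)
Your proposal is correct and essentially self-contained. The paper, by contrast, does not give a proof at all: it simply remarks that the case $\vit\equiv 1$ is \cite[Theorem 2.4]{Coron} and that the general case follows by the time reparametrization $\tilde t=\intg(t)$. So your approach differs from the paper's only in that you carry out directly, for general $\vit$, the characteristics construction, the regularity verification across the interface $\{x-\xmin=\intg(t)\}$, and the $L^2$ energy estimate for uniqueness, whereas the paper first reduces to constant speed and then defers everything to Coron. What the paper's route buys is brevity and a clean reference; what yours buys is a transparent explanation of where the compatibility condition $\cont(t_0)=\etat_0(\xmin)$ enters (gluing the two pieces into a genuine $C^0_tH^1_x\cap C^1_tL^2_x$ function) and an explicit uniqueness argument, neither of which the paper spells out.
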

The proof of this theorem can be found in \cite[Theorem 2.4]{Coron} in the case $\vit = 1$, and can be easily adapted by means of a time reparametrization.
It is worth noticing that this theorem does not take into account hypothesis \eqref{eq_EtatMax}.
However, the following proposition holds.
\begin{proposition}
Assume that the hypothesis of Theorem \ref{th:exist} are satisfied.
Assume that there exists $\bar{x}\in[\xmin, \xmax)$ such that $\etat_0(x) = 0$ for all $x\in[\bar{x}, \xmax]$.
If
\begin{align}
    \bar{x} + \intg(t_1) < \xmax,
\end{align}
then $\psi(t, x) = 0$ for all $t\in[t_0, t_1]$ and all $x\in\left[\bar{x} + \intg(t_1), \xmax \right]$.
\end{proposition}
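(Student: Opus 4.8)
The plan is to rely entirely on the explicit representation \eqref{eqf} of the solution $\psi$ furnished by Theorem~\ref{th:exist}, which reduces the statement to a one-line comparison of intervals. I would fix $t\in[t_0,t_1]$ and $x\in[\bar x+\intg(t_1),\xmax]$, and aim to show $\psi(t,x)=0$. The first step is to record that $\intg$ is nondecreasing on $[t_0,t_1]$ (since $\intg'=\vit\ge0$) and that $\intg(t_0)=0$, hence $0\le\intg(t)\le\intg(t_1)$. Combined with $\bar x\ge\xmin$ this gives $x-\xmin\ge(\bar x-\xmin)+\intg(t_1)\ge\intg(t_1)\ge\intg(t)$, so that $x$ falls in the \emph{first} branch of \eqref{eqf}, i.e. $\psi(t,x)=\psi_0(x-\intg(t))$.

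The second step is to check that the shifted argument $x-\intg(t)$ stays in the interval $[\bar x,\xmax]$ on which $\psi_0$ is assumed to vanish. The upper bound is immediate since $\intg(t)\ge0$: $x-\intg(t)\le x\le\xmax$. For the lower bound, $\intg(t)\le\intg(t_1)$ yields $x-\intg(t)\ge(\bar x+\intg(t_1))-\intg(t)\ge\bar x$. Therefore $x-\intg(t)\in[\bar x,\xmax]$, whence $\psi(t,x)=\psi_0(x-\intg(t))=0$.

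Conceptually this is nothing more than the observation that, along characteristics, the seed profile $\psi_0$ --- whose support lies in $[\xmin,\bar x)$ --- is translated to the right by $\intg(t)\le\intg(t_1)$ and hence never reaches the window $[\bar x+\intg(t_1),\xmax]$, where in addition the nucleation branch of \eqref{eqf} never contributes. There is thus no real obstacle; the only points needing (elementary) care are confirming that one sits in the transport branch of \eqref{eqf} and that the translated argument remains $\le\xmax$, both being consequences of $0\le\intg(t)\le\intg(t_1)$. The hypothesis $\bar x+\intg(t_1)<\xmax$ does not enter the computation itself: it serves only to make the conclusion non-vacuous and, in particular, to show that the proposition genuinely refines the boundary condition \eqref{eq_EtatMax}.
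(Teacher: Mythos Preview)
Your proof is correct and follows the same approach as the paper: both arguments fix $(t,x)$ in the target region, use the monotonicity of $\intg$ to land in the first branch of \eqref{eqf}, and then check that the shifted argument $x-\intg(t)$ lies in $[\bar x,\xmax]$ where $\psi_0$ vanishes. Your write-up is in fact more explicit than the paper's (which compresses the argument to the single inequality $x-\intg(t_1)\ge\bar x\ge\xmin$ and leaves the rest implicit), and your closing remark on the role of the hypothesis $\bar x+\intg(t_1)<\xmax$ is a nice addition.
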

\begin{proof}
Let $t\in[t_0, t_1]$ and $x\in\left[\bar{x} + \intg(t_1), \xmax \right]$. Then
\begin{align*}
    x - \intg(t_1) \geq \bar{x} \geq \xmin.
\end{align*}
Consequently, according to \eqref{eqf},
\(
    \etat(t, x) = \etat_0(x - \intg(t)) = 0.
\)
\end{proof}

Hence, one must choose $t_1$ small enough so that the particles did not reach the size $\xmax$. Roughly speaking, this means that the observer that we are going to design must estimate the state in small time, \emph{i.e.} before the particles reach the size $\xmax$.
In the following, we always assume that \eqref{eq_EtatMax} is satisfied.

\subsection{Concentration sensor modeling}

In the considered batch crystallization process, the measured outputs are the temperature and the solute concentration denoted $\cste(t)$.
These two measurements allow to obtain online estimation of the growth rate (\emph{i.e.} $\vit$) and the third moment of the NDF.

\subsubsection{\texorpdfstring{Estimation of $\vit$}{Estimation of G}}
\label{estG}

The knowledge of the temperature and the solute concentration allows to obtain some approximation of the growth rate $\vit$ .
Indeed, following \cite{Uccheddu}, a model of $\vit$ can be given for all time $t\in [t_0, t_1]$ by
\begin{equation}
\vit(t) = k_g \frac{\cste(t) - C^*(t)}{C^*(t)}
\label{eqG}
\end{equation}
where
\begin{itemize}
\item $k_g$ is a known growth rate parameter (in $\mathrm{m.s^{-1}}$),
\item $C^*(t)$ is the solubility at time $t$ (in $\mathrm{kg}$ of solute per $\mathrm{kg}$ of solvent),
\item $\cste(t)$ is the solute concentration at time $t$ (in $\mathrm{kg}$ of solute per $\mathrm{kg}$ of solvent).
\end{itemize}
Since $C^*(t)$ depends on the temperature at time $t$ ,
the growth rate $\vit$ of the crystals can be estimated online with the available sensors.\\
\startlucas
Other model expressions of $G$ are available in the literature, for more details one may refer to \cite{Mersmann,Mullin}.
\stoplucas

\subsubsection{Estimation of the third moment of the NDF}
It is possible to link the solute concentration with the NDF.
Indeed, for each $t\in \RR_+$, let $\cs(t)$ (in $\mathrm{kg}$ of solid per $\mathrm{kg}$ of solvent) be the solid concentration in the reactor at time $t$, 
in other words, the ratio between the total crystals mass in the reactor at time $t$ and the solvent mass. 
Let $\rho_s$ (in $\mathrm{kg}$.$\mathrm{m^{-3}}$) be the density of the solute in solid phase and $M_e$ the solvent mass (in $\mathrm{kg}$).
It yields:
\begin{align*}
\cs(t) = \frac{\rho_s}{M_e}V_s(t)
\end{align*}
where $V_s(t)$ is the volume (in $\mathrm{m^3}$) occupied by the crystals at time $t$.
\startlucas
Then the volume of a crystal with size $x$ (in $\mathrm{m}$) is simply $V = k_v x^3$
where $k_v$ is a volumetric shape factor (see \emph{e.g.} \cite{Hulburtkatz,Randolph}). For example, $k_v = \pi/6$ for spherical crystals.
The total volume of the crystals is then
\stoplucas
\begin{align*}
V_s(t) = k_v\int_{\xmin}^{\xmax} \etat(t, x) x^3 \dd x
\end{align*}
Hence, the solid concentration in the reactor can be expressed as follows.
\begin{align}
\forall t \in [t_0, t_1],\quad \cs(t) = \frac{\rho_s k_v}{M_e}\int_{\xmin}^{\xmax} \etat(t, x) x^3 \dd x.
\end{align}

Assume moreover that $\rho_s$ is a known parameter.
This implies that we can associate  to system \eqref{systbilan}  the measurement $\mes$ defined as
\begin{equation}
\forall t \in [t_0, t_1],\quad \mes(t) = \int_{\xmin}^{\xmax} \etat(t, x) x^3 \dd x.
\label{eqy}
\end{equation}

From there, the observation problem we intend to solve is the following.
\textit{From the knowledge of the output function $y(t)$ and the growth rate, give an online estimation of the NDF.}

The purpose of the  Section \ref{Sec_Obs_Cryst} is to propose a novel algorithm to solve this problem based on the
\startlucas KKL 
\stoplucas
 methodology which was described in Section \ref{secluen}.
Note however that before implementing the observer an observability analysis can be carried out.

\subsection{About the observability of the crystallization model}

In this section, we study how the third moment may help us to estimate the NDF.
First, we have the following result.
\begin{proposition}\label{prop_pos}
Let $\tau\in(t_0, t_1]$.
\startlucas
Assume that there exists $\mu>0$ such that $\vit(t)\geq\mu$ for all $t\in[t_0, t_1]$.
\stoplucas
Then for all $\mes\in C^0(t_0, \tau)$, there exists at most one function $\cont\in H^4(t_0, \tau)$ such that the solution $\etat$ of \eqref{systbilan} given by $\cont$ and $\etat_0 = 0$ satisfies
\begin{align*}
    \mes(t) = \int_{\xmin}^{\xmax} \etat(t, x)x^3\dd x,\qquad \forall t\in[t_0, \tau].
\end{align*}
\end{proposition}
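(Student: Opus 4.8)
The plan is to exploit the explicit formula \eqref{eqf} to turn the integral measurement into an explicit functional of $\cont$, and then argue injectivity of this functional. With $\etat_0 = 0$, formula \eqref{eqf} reduces to $\etat(t, x) = \cont\circ\intg^{-1}(\intg(t) - x + \xmin)$ when $x - \xmin \le \intg(t)$ (and $\etat(t,x) = 0$ otherwise, since $\etat_0 = 0$). Since $\vit \ge \mu > 0$, the map $\intg$ is a strictly increasing $C^1$-diffeomorphism from $[t_0, t_1]$ onto $[0, \intg(t_1)]$, so $\intg^{-1}$ is well-defined and $C^1$; this is where the hypothesis $\vit \ge \mu$ is used (to make $\intg^{-1}$ well-defined and Lipschitz on the relevant range). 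Substituting into the measurement gives, for each $t \in [t_0, \tau]$,
\begin{align*}
\mes(t) = \int_{\xmin}^{\min(\xmax,\, \xmin + \intg(t))} \cont\circ\intg^{-1}\big(\intg(t) - x + \xmin\big)\, x^3\, \dd x.
\end{align*}

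Next I would perform the change of variables $s = \intg^{-1}(\intg(t) - x + \xmin)$, equivalently $x = \xmin + \intg(t) - \intg(s)$, so that $\dd x = -\vit(s)\,\dd s$, turning the above into a Volterra-type integral expression
\begin{align*}
\mes(t) = \int_{t_0}^{t} \cont(s)\, \big(\xmin + \intg(t) - \intg(s)\big)^3\, \vit(s)\, \dd s
\end{align*}
(for $t$ small enough that $\xmin + \intg(t) \le \xmax$; one has to check the upper limit of integration is exactly $t$, which follows because $x = \xmin$ corresponds to $s = t$ and $x = \xmin + \intg(t)$ corresponds to $s = t_0$). The kernel $K(t,s) = (\xmin + \intg(t) - \intg(s))^3 \vit(s)$ is continuous, but it vanishes on the diagonal $s = t$, so this is a Volterra equation of the \emph{first kind} with a degenerate kernel — one cannot invert it directly.

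The standard remedy, and the technical heart of the argument, is to differentiate in $t$ repeatedly to lower the order of the zero on the diagonal. Differentiating once, the boundary term at $s=t$ vanishes (because $K(t,t) = 0$), leaving $\mes'(t) = \int_{t_0}^t 3(\xmin + \intg(t) - \intg(s))^2 \vit(t)\vit(s)\,\dd s$, whose kernel still vanishes to order two on the diagonal. Differentiating three times total kills the integral-kernel contribution on the diagonal down to a nonzero constant: after three differentiations one obtains an expression of the form $\mes'''(t) = 6\,\xmin^{\,?}\cdots$ — more precisely the surviving diagonal term is $6\,\vit(t)^3\,\xmin^{?}$... let me instead say: one finds that $\frac{\dd^3}{\dd t^3}\mes(t) = 6\,\vit(t)^2\,\vit(t)\,\cont(t)\cdot(\text{nonzero factor}) + \int_{t_0}^t(\text{smooth kernel})\cont(s)\,\dd s$, i.e. a Volterra equation of the \emph{second kind} for $\cont$ with a continuous kernel and a coefficient of $\cont(t)$ that is bounded below by $6\mu^3 > 0$ (this is again where $\vit \ge \mu$ enters, now crucially, to make the leading coefficient invertible). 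This is exactly why the regularity hypothesis is $\cont \in H^4(t_0,\tau)$: three differentiations of $\mes$ require $\mes \in C^3$, which by the formula needs $\cont$ to have enough Sobolev regularity, and $H^4 \hookrightarrow C^3$ comfortably accommodates the computation. Once we have a second-kind Volterra equation $a(t)\cont(t) = \Phi(t) + \int_{t_0}^t L(t,s)\cont(s)\,\dd s$ with $a$ bounded away from zero and $L$ continuous, uniqueness is immediate: if $\cont_1, \cont_2$ both work then their difference $w$ satisfies $a(t)w(t) = \int_{t_0}^t L(t,s)w(s)\,\dd s$ with $w(t_0)$ determined to be $0$, and a Grönwall estimate $|w(t)| \le \frac{1}{\mu^3}\int_{t_0}^t |L(t,s)|\,|w(s)|\,\dd s$ forces $w \equiv 0$ on $[t_0,\tau]$.

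The main obstacle is the bookkeeping in the differentiation step: one must carefully track the boundary terms produced by Leibniz's rule at $s = t$ and confirm that the first two vanish while the third produces the nonvanishing coefficient $\propto \vit(t)^3$, and one must verify the requisite smoothness of the resulting kernels from $\vit \in C^0$ and $\cont \in H^4$ (note that differentiating the kernel in $t$ only hits $\intg(t)$, whose derivative is the merely-continuous $\vit$, so the kernels stay continuous but not more — which is fine for a second-kind Volterra uniqueness argument). A secondary point to handle cleanly is the restriction $\xmin + \intg(t) \le \xmax$, i.e. that the upper limit $\xmax$ in the integral does not bind; but this is guaranteed on $[t_0,\tau]$ by the standing hypothesis \eqref{eq_EtatMax} together with the earlier remark that $t_1$ (hence $\tau$) is taken small enough that particles have not reached $\xmax$, so that the measurement integral really does run from $\xmin$ to $\xmin + \intg(t)$ and the clean Volterra form holds.
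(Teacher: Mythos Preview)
Your approach via the explicit solution formula and Volterra integral equations is sound in spirit and genuinely different from the paper's, but there is a concrete computational slip that changes the structure of the argument.

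You write the kernel $K(t,s) = (\xmin + \intg(t) - \intg(s))^3\, \vit(s)$ and then assert $K(t,t) = 0$. But on the diagonal $K(t,t) = \xmin^{\,3}\,\vit(t)$, and the paper's modelling section explicitly takes $\xmin > 0$ (``all the crystals appear at the same positive size $\xmin$''), so $K(t,t) \ge \xmin^{\,3}\mu > 0$. The kernel does \emph{not} vanish on the diagonal; the equation you derived is already only one differentiation away from second kind. Differentiating once gives
\[
\frac{\mes'(t)}{\vit(t)} \;=\; \xmin^{\,3}\, \cont(t) \;+\; \int_{t_0}^{t} 3\big(\xmin + \intg(t) - \intg(s)\big)^{2}\,\vit(s)\,\cont(s)\,\dd s,
\]
a second-kind Volterra equation with continuous kernel and leading coefficient $\xmin^{\,3} > 0$; uniqueness follows at once by Gr\"onwall. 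So your route does work, and with far less regularity than $H^4$ --- the error made your life harder, not the strategy wrong.

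The paper argues differently. It first reparametrizes time by $\tilde t = \intg(t)$ to reduce to $\vit\equiv 1$ (this is where $\vit\ge\mu$ enters), and then, rather than invoking the solution formula \eqref{eqf}, differentiates $\mes$ repeatedly directly from the PDE using integration by parts. Four differentiations yield $\mes^{(4)} = 6\cont + 6\xmin\cont' + 3\xmin^{\,2}\cont'' + \xmin^{\,3}\cont'''$ (in the new time), a third-order linear ODE for $\cont$; the initial data $\cont(t_0),\cont'(t_0),\cont''(t_0)$ are read off from $\mes'(t_0),\mes''(t_0),\mes'''(t_0)$ via a triangular system (using $\etat_0 = 0$ and, crucially, $\xmin>0$), and Cauchy--Lipschitz finishes. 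This is exactly why the paper needs $\cont\in H^4$: it takes four time-derivatives. Your corrected argument provides an independent proof that, incidentally, reveals the $H^4$ hypothesis to be stronger than necessary.

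One further caution on your ``three differentiations'' plan: had $\xmin = 0$ (so that the kernel really did vanish on the diagonal), naively differentiating in the \emph{original} time $t$ would stumble on regularity, since after one differentiation a bare factor $\vit(t)$ appears and $\vit$ is only $C^0$. One must divide by $\vit(t)$ between successive differentiations --- which is precisely the chain rule for the reparametrization $\tilde t = \intg(t)$ that the paper performs at the outset.
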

In other words,
Proposition \ref{prop_pos} states that the map $\cont\mapsto \mes$ is injective, where $\mes$ denotes the third moment of the solution of \eqref{systbilan} with null initial condition.
Its proof can be found in appendix.
Hence, one can hope that our method may reconstruct $\etat$ from $\mes$, at least when the initial condition is zero (\emph{i.e.} there is no crystals at the beginning of the process).

However,
one can wonder what happens if the initial condition is not zero. Can we still reconstruct the state from the measurement of its third moment and the knowledge of its dynamics? In other words, is the map $\etat\mapsto \mes$ injective?
If yes,
then one can hope that our algorithm is robust, so that the estimation of the state converges to the actual NDF.
\startlucas
Unfortunately,
the answer is no. Indeed, we have the following proposition which is a slight modification of \cite[Theorem 3.2.3]{vissers2012model} that we state in our own context only. For the convenience of the reader, its proof is given in appendix.
\stoplucas

\begin{proposition}
\label{prop_neg}
Let $\tau\in(t_0, t_1]$.
\startlucas
Assume that there exists $\mu>0$ such that $\vit(t)\geq\mu$ for all $t\in[t_0, t_1]$.
\stoplucas
There exist infinitely many solutions of \eqref{systbilan} with different initial conditions and boundary conditions that have the same third moment $\mes\in C^0(t_0, \tau)$.
\end{proposition}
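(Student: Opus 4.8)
The plan is to exploit the explicit representation formula \eqref{eqf} to build, for each integer $n$, an explicit pair of solutions with distinct initial and boundary data but identical third moments, and then combine these into an infinite family. The key observation is that the map sending $(\etat_0, \cont)$ to the third moment $\mes(t) = \int_{\xmin}^{\xmax}\etat(t,x)x^3\,\dd x$ is linear, so it suffices to produce a single nontrivial solution $\etat$ of \eqref{systbilan} (with \emph{some} nonzero initial condition $\etat_0$ and nonzero boundary data $\cont$, compatible in the sense $\cont(t_0) = \etat_0(\xmin)$) whose third moment vanishes identically on $[t_0,\tau]$; adding real multiples of this ``invisible'' solution to any fixed reference solution then yields infinitely many solutions with the same $\mes$.

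First I would reduce to constructing an invisible solution. Using the change of variables $s = \intg(t)$ (legitimate since $\vit \geq \mu > 0$ makes $\intg$ a strictly increasing $C^1$ bijection onto $[0,\intg(t_1)]$), the transport equation \eqref{trans} becomes the constant-speed transport equation, so without loss of generality I may work as if $\vit \equiv 1$, exactly as in the cited \cite[Theorem 3.2.3]{vissers2012model} and in Theorem~\ref{th:exist}. Then \eqref{eqf} says $\etat(t,x)$ is simply the datum $\etat_0$ shifted to the right by $\intg(t)$, with the boundary profile $\cont\circ\intg^{-1}$ filling in near $\xmin$. The third moment at time $t$ is therefore an integral of $x^3$ against a translate of the combined profile. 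I would pick $\tau$ (equivalently a horizon $\sigma := \intg(\tau) > 0$) and look for a profile supported in a window of width $\sigma$ whose translated third moments all cancel: concretely, I want a function $f$ (playing the role of the combined seed/boundary profile on a reference interval) such that $\int f(x - s)\,x^3\,\dd x = 0$ for all $s \in [0,\sigma]$. Expanding $x^3 = ((x-s)+s)^3$ and collecting powers of $s$, this is equivalent to the four moment conditions $\int f(\xi)\,\xi^k\,\dd\xi = 0$ for $k = 0,1,2,3$. Such an $f$ exists in abundance: any smooth, compactly supported, nonzero function orthogonal to the four-dimensional space spanned by $1, \xi, \xi^2, \xi^3$ on its support works (this is where $H^4$-type regularity from Proposition~\ref{prop_pos} is \emph{not} needed — we are on the negative side and may take $f$ as smooth as we like).

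The remaining step is bookkeeping: translate this $f$ back into admissible data $(\etat_0, \cont)$ for \eqref{systbilan} on the original interval, check the compatibility condition $\cont(t_0) = \etat_0(\xmin)$ and the regularity $\etat_0 \in H^1(\xmin,\xmax)$, $\cont \in H^1(t_0,t_1)$, and verify via \eqref{eqf} that the corresponding solution has vanishing third moment on $[t_0,\tau]$ while being nonzero. One must place the support of $f$ carefully inside $[\xmin, \xmin + \sigma]$ (or split it between an initial-data part and a boundary part) so that over the time window $[t_0,\tau]$ the solution is exactly the translated profile and never interacts with the far boundary $\xmax$; shrinking $\tau$ if necessary is harmless since the proposition is stated for arbitrary $\tau \in (t_0,t_1]$. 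Finally, fixing any reference solution $\etat^{\mathrm{ref}}$ of \eqref{systbilan} and setting $\etat^{(c)} = \etat^{\mathrm{ref}} + c\,\etat$ for $c \in \RR$ produces, by linearity, an infinite family of solutions with pairwise distinct initial and boundary conditions (distinct because $\etat$ is nonzero) and the common third moment $\mes^{\mathrm{ref}}$ on $[t_0,\tau]$, which is the claim.

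The main obstacle I anticipate is not the algebra of the four moment conditions — that is elementary linear algebra in a function space — but the careful matching of the abstract profile $f$ with the \emph{two} separate channels through which data enters \eqref{systbilan}, namely the initial profile $\etat_0$ on $[\xmin,\xmax]$ and the inflow $\cont$ on $[t_0,t_1]$, together with ensuring the compatibility condition and the non-interaction with the boundary $\xmax$ over the chosen horizon. Getting the domains of definition and the change of variables $s = \intg(t)$ to line up cleanly, so that \eqref{eqf} genuinely gives a pure translate with zero third moment for every $t \in [t_0,\tau]$, is the delicate part; everything else follows from linearity of the moment map and the explicit formula already established.
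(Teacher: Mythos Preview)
Your argument is correct and rests on the same core observation as the paper's --- that after the time reparametrization $s=\intg(t)$ (which both you and the paper perform), the third moment of a translated profile depends only on its first four classical moments --- but the two proofs extract this fact from opposite ends. The paper differentiates $y$ four times via integration by parts using \eqref{eq_EtatMax}, obtains \eqref{E:y(4)}, sets $u\equiv 0$ to conclude $y^{(4)}\equiv 0$, and then invokes rank--nullity: the linear map $\psi_0\mapsto y$ lands in the four-dimensional space of cubic polynomials, hence has infinite-dimensional kernel. Your route is the constructive dual: you exhibit kernel elements directly by expanding $(\xi+s)^3$ and imposing the four orthogonality conditions $\int f(\xi)\xi^k\,\dd\xi=0$, $k=0,1,2,3$, on a smooth compactly supported $f$. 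Your version produces explicit invisible profiles, while the paper's yields the dimension count (and its ``four-dimensional observable part'' remark) with less bookkeeping. One small correction: you may not shrink $\tau$, since it is fixed at the outset of the statement; but this does no damage, because under the standing assumption \eqref{eq_EtatMax} there is always room in $(\xmin,\xmax)$ to support $f$ so that its translate never reaches $\xmax$ on $[t_0,\tau]$, and you can simply take $u\equiv 0$ rather than splitting $f$ between the initial and boundary channels --- which is exactly what the paper does.
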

We shall say that system \eqref{systbilan} with measurement \eqref{eqy} is not observable.
\startlucas
Thus, we cannot guarantee that our estimation of the NDF converges to the actual NDF.
Despite this fact, our methodology should be able to reconstruct partially the actual NDF.
Indeed, the linear function that maps the NDF to its third moment has rank 4.
The image of this state-output mapping is sometimes called the \emph{observable part} of the system (see \emph{e.g.} \cite{haine2014recovering}), due to the fact that an observer shall estimate at least the projection of the actual state on this subspace.{}
See the proof in appendix for more details.
\stoplucas

\subsection{A dynamical observer from the concentration measurement}
\label{Sec_Obs_Cryst}

Eventhough it was shown in the former section that the system is not observable, in this subsection, we show how the 
\startlucas KKL 
\stoplucas
observer approach can be employed on the considered model.
Following the procedure given in Section \ref{secluen}, we
consider $\lambda$ a negative real number and the dynamical system
\begin{equation}\label{eq_ObsCrist}
\dot z = \lambda z + y\ .
\end{equation}
We must find a mapping $\TR_\lambda$ which is estimated by this dynamical equation.
We have the following proposition.
\begin{proposition}
Let $\TR_\lambda : C^1([t_0,t_1]; L^2(\xmin, \xmax))\mapsto C^1([t_0,t_1])$ be the functional defined as
\begin{equation}\label{defT}
    \TR_\lambda(\etat) : t \mapsto \int_{\xmin}^{\xmax}a(t,x)\etat(t,x)\dd x
\end{equation}
where
\begin{equation}
\begin{cases}
\partial_t a(t, x) + \vit(t) \partial_x a(t, x) = \lambda a(t, x) + x^3 & \forall t \in (t_0, t_1), \ \forall x \in (\xmin, \xmax)\\
a(t_0, x) = 0 &\forall x \in [\xmin, \xmax]\\
a(t, \xmin) = 0 &\forall t \in [t_0, t_1],
\end{cases}
\label{edpa}
\end{equation}
then, along the solution of \eqref{eq_ObsCrist} which satisfies \eqref{eq_EtatMax} where $y$ is given in \eqref{eqy}, it yields for all $t$ in $[t_0,t_1]$
\begin{equation}\label{eq_ObTCrist}
\startlucas
\TR_\lambda(\etat)(t) - z(t) = \exp(\lambda t) (\TR_\lambda(\etat)(0) - z_0).
\stoplucas
\end{equation}
\end{proposition}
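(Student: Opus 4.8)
<br>

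The plan is to mimic the proof of Proposition~\ref{propobs} from Section~\ref{secluen}, but now in the concrete setting of the transport equation, where the role of the abstract resolvent operator $\TR_\lambda = \HR(\FR-\lambda\Id)^{-1}$ is played by the functional \eqref{defT} whose kernel $a$ solves the adjoint-type PDE \eqref{edpa}. The key identity to establish is that $\TR_\lambda$ as defined here genuinely intertwines the dynamics the way the abstract Sylvester equation $\FR\TR_\lambda = \lambda\TR_\lambda + \HR$ does; once that is in hand, \eqref{eq_ObTCrist} follows by the same one-line Gronwall/integration argument as before.

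Concretely, I would first fix a solution $\etat$ of \eqref{systbilan} satisfying \eqref{eq_EtatMax} (so with the regularity from Theorem~\ref{th:exist}, in particular $\etat\in C^1([t_0,t_1];L^2)$ and $\etat(t,\xmax)=0$, $\etat(t,\xmin)=\cont(t)$), and a solution $a$ of \eqref{edpa}. Then I would differentiate $t\mapsto \TR_\lambda(\etat)(t) = \int_{\xmin}^{\xmax} a(t,x)\etat(t,x)\,\dd x$ under the integral sign:
\begin{align*}
\frac{d}{dt}\TR_\lambda(\etat)(t) = \int_{\xmin}^{\xmax}\big(\partial_t a\,\etat + a\,\partial_t\etat\big)\dd x
= \int_{\xmin}^{\xmax}\Big(\big(\lambda a + x^3 - \vit\partial_x a\big)\etat - a\,\vit\partial_x\etat\Big)\dd x,
\end{align*}
using \eqref{edpa} for $\partial_t a$ and \eqref{trans} for $\partial_t\etat$. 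The two terms involving $\vit$ combine into $-\vit\int_{\xmin}^{\xmax}\partial_x(a\etat)\,\dd x$, which integrates to the boundary term $-\vit(t)\big[a(t,\xmax)\etat(t,\xmax) - a(t,\xmin)\etat(t,\xmin)\big]$; this vanishes because $a(t,\xmin)=0$ by the boundary condition in \eqref{edpa} and $\etat(t,\xmax)=0$ by \eqref{eq_EtatMax}. What remains is
\begin{align*}
\frac{d}{dt}\TR_\lambda(\etat)(t) = \lambda\int_{\xmin}^{\xmax}a\etat\,\dd x + \int_{\xmin}^{\xmax}x^3\etat\,\dd x = \lambda\,\TR_\lambda(\etat)(t) + y(t),
\end{align*}
recalling $y(t)=\int_{\xmin}^{\xmax}\etat(t,x)x^3\,\dd x$ from \eqref{eqy}. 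Subtracting the observer equation \eqref{eq_ObsCrist}, the error $e(t)=\TR_\lambda(\etat)(t)-z(t)$ satisfies $\dot e = \lambda e$, and integrating gives $e(t) = \exp(\lambda t)e(0)$ — wait, here one should note that the semigroup in the abstract statement was based at time $0$, so with $a(t_0,\cdot)=0$ forcing the natural "initial" comparison at $t_0$; I would simply carry the integration exactly as the proposition states, noting that $\exp(\lambda t)(\TR_\lambda(\etat)(0)-z_0)$ is the solution of $\dot e=\lambda e$ with the prescribed initialization, which yields \eqref{eq_ObTCrist}.

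The main obstacle is not the algebra but the justification of differentiating under the integral sign and of the integration by parts in $x$: this requires knowing that $a$ has enough regularity (at least $a\in C^0([t_0,t_1];H^1(\xmin,\xmax))\cap C^1([t_0,t_1];L^2(\xmin,\xmax))$, which follows by applying Theorem~\ref{th:exist}-type reasoning to \eqref{edpa} after absorbing the $\lambda a + x^3$ source term, e.g.\ via Duhamel/the method of characteristics), so that both $\partial_t a\,\etat$ and $a\,\partial_t\etat$ are in $L^1$ in $x$ uniformly in $t$ and the trace values $a(t,\xmin)$, $\etat(t,\xmax)$ are well-defined pointwise. I would state this regularity as a preliminary observation (citing \cite[Theorem 2.4]{Coron} exactly as was done for Theorem~\ref{th:exist}, since \eqref{edpa} is the same transport operator with an affine right-hand side), and then the computation above goes through rigorously. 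The vanishing of the boundary term is the one place where hypothesis \eqref{eq_EtatMax} is genuinely used, which is consistent with the statement's hypothesis "along the solution of \eqref{eq_ObsCrist} which satisfies \eqref{eq_EtatMax}".
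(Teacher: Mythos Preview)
Your proof is correct and follows essentially the same approach as the paper: differentiate $\TR_\lambda(\etat)(t)$ (the paper works directly with the error $\TR_\lambda(\etat)-z$, but this is cosmetic), substitute the PDEs for $\partial_t\etat$ and $\partial_t a$, integrate by parts in $x$, kill the boundary term via $a(t,\xmin)=0$ and \eqref{eq_EtatMax}, and integrate $\dot e=\lambda e$. Your additional remarks on the regularity of $a$ needed to justify differentiation under the integral and the trace values are a welcome supplement that the paper leaves implicit.
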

\begin{proof}
Using \eqref{systbilan} and an integration by parts yields
\begin{align*}
\frac{d}{dt}(\TR_\lambda(\etat)(t) - z(t))
&= \int_{\xmin}^{\xmax}\partial_t a(t, x) \etat(t,x) dx
    - \int_{\xmin}^{\xmax}\vit(t)  a(t,x) \partial_x \etat(t,x)  dx
\\
& \qquad -\lambda z(t) -\int_{\xmin}^{\xmax}x^3 \etat(t,x) dx
\\
&= \int_{\xmin}^{\xmax}\partial_t a(t, x) \etat(t,x) dx
    + \int_{\xmin}^{\xmax}\vit(t)  \partial_x a(t,x)  \etat(t,x)  dx \\
& \qquad - G(t)\big[a(t,x)\etat(t,x) \big]_{\xmin}^{\xmax}
    -\lambda z(t) -  \int_{\xmin}^{\xmax}x^3 \etat(t,x) dx .
\end{align*}
Hence, with \eqref{edpa} and also the boundary condition in  \eqref{systbilan} and \eqref{eq_EtatMax}, this implies
$$
\frac{d}{dt}(\TR_\lambda(\etat)(t) - z(t)) =  \lambda (\TR_\lambda(\etat)(t) - z(t)).
$$
By integrating in time the former equation, we obtain \eqref{eq_ObTCrist}.
\end{proof}

Consequently, for each negative $\lambda$ we exponentially estimate the functional $\TR_\lambda \etat(t)$.
It is interesting to remark that no information on the nucleation rate is needed to obtain this estimation.
The state observer is given as \eqref{eq_Obs_KKL_Tikhonov}, after a choice of the regularization parameter $\delta$.

\section{Numerical simulations}\label{secsimu}
In this subsection
\startlucas
numerical simulations are
\stoplucas
carried out.
\startlucas
Let $(\mathbb{x}_j)_{1\leq j\leq N_x}$ be a uniform discretization of the space interval $(x_{\min}, x_{\max})$ with space step $\Delta x$ and $(\mathbb{t}_k)_{1\leq k\leq N_t}$ be a uniform discretization of the time interval $(t_0, t_1)$ with time step $\Delta t$.
We fix $N_x = N_t = 100$.
Let $(\lambda_j)_{1\leq i\leq p}$ be the considered negative values of $\lambda$.
An approximation of $(\TR_{\lambda_i}\psi)(\mathbb{t}_k)$ is given by $\Delta x \sum_{j=1}^{N_x} a_{i, j, k} \psi_{j, k}$ where $a_{i, j, k}$ is an approximation of $a_{\lambda_i}(\mathbb{t}_k, \mathbb{x}_j)$ (solution of \eqref{edpa}) and $\psi_{j, k}$ an approximation of $\psi(\mathbb{t}_k, \mathbb{x}_j)$.
\stoplucas
The transport equation which describes the crystallization process is simulated via the method of characteristics.

\startlucas
We consider the system \eqref{systbilan} with $\vit$ as in \eqref{eqG} with a null initial condition $\etat_0 = 0$ and a boundary condition similar to a truncated normal distribution reaching its maximum at $t=3$ $\mathrm{s}$ and with a compact support $[0, 6]$
\startlucas
(see Fig.~\ref{fig:u}).
\stoplucas
The unique solution of this system is drawn in Fig.~\ref{fig:sol} (solid line),
\startlucas
and the corresponding growth rate is drawn in Fig.~\ref{fig:G}.
\stoplucas

\begin{figure}
    \begin{subfigure}{.5\linewidth}
    \centering
    \includegraphics[width = \linewidth]{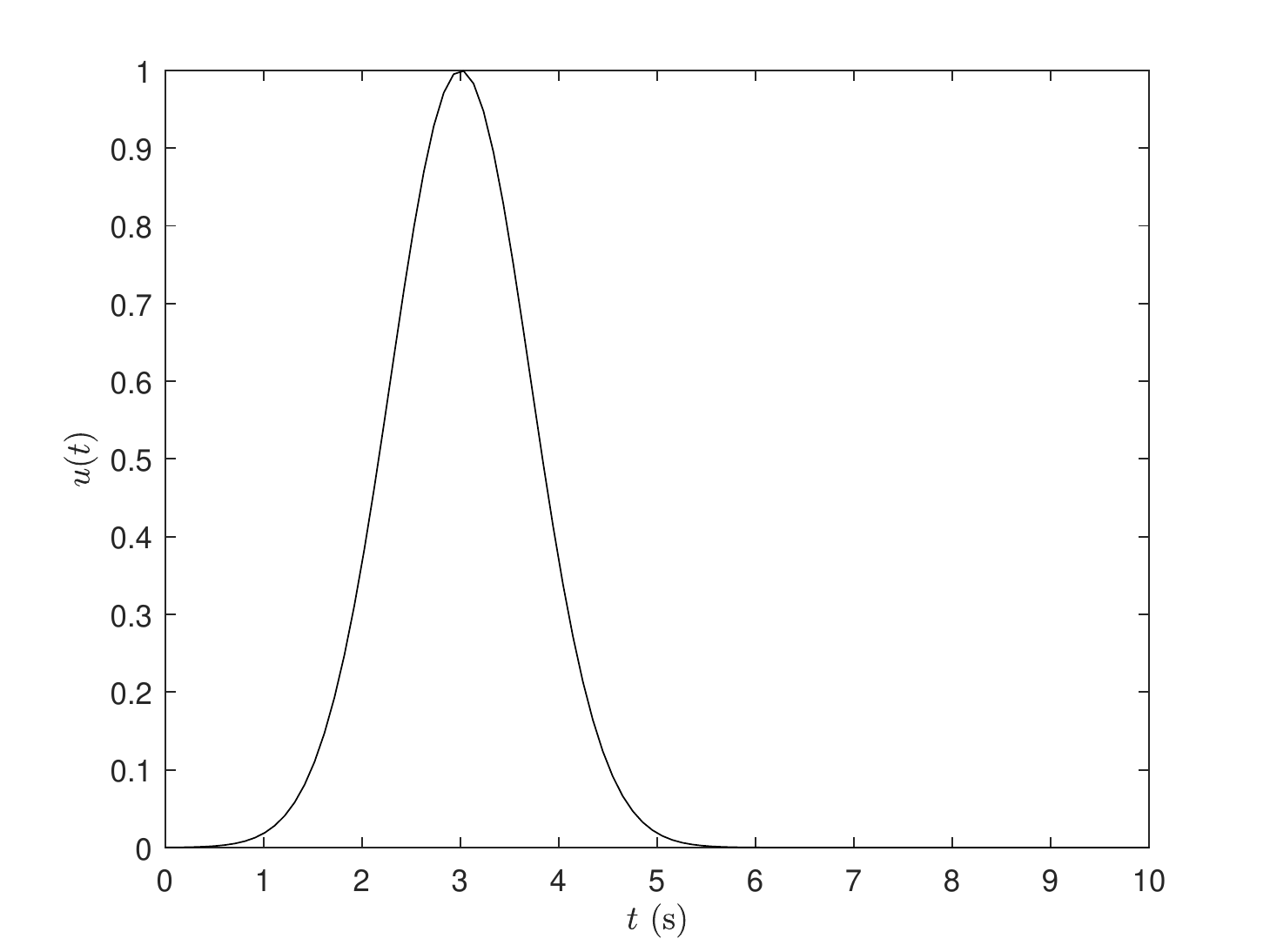}
    \caption{Boundary condition $u$ due to the nucleation}
    \label{fig:u}
    \end{subfigure}
    \begin{subfigure}{.5\linewidth}
    \centering
    \includegraphics[width = \linewidth]{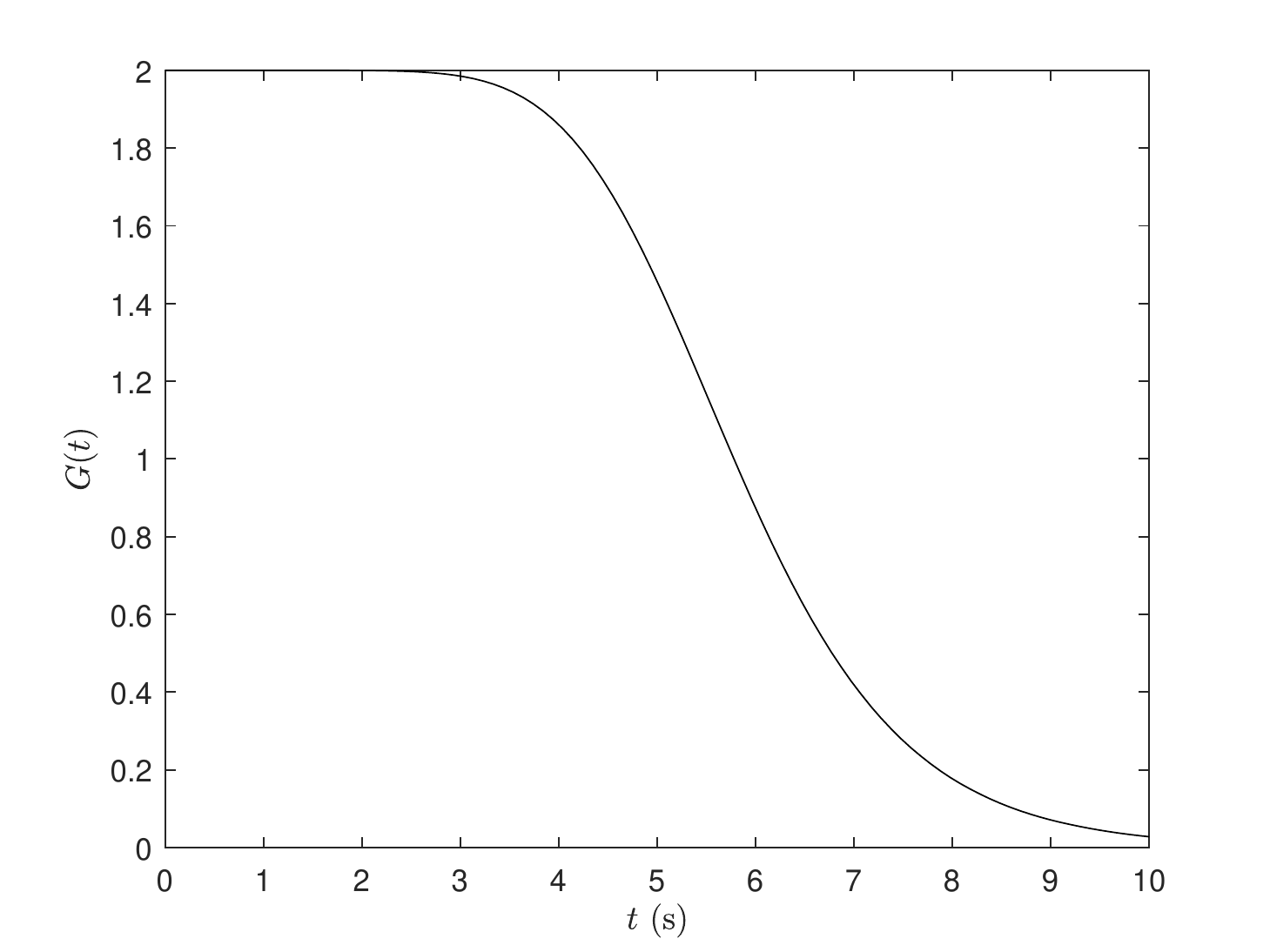}
    \caption{Growth rate $\vit$, given by \eqref{eqG}}
    \label{fig:G}
    \end{subfigure}\par\medskip
    \centering
    \begin{subfigure}{.5\linewidth}
    \centering
    \includegraphics[width = \linewidth]{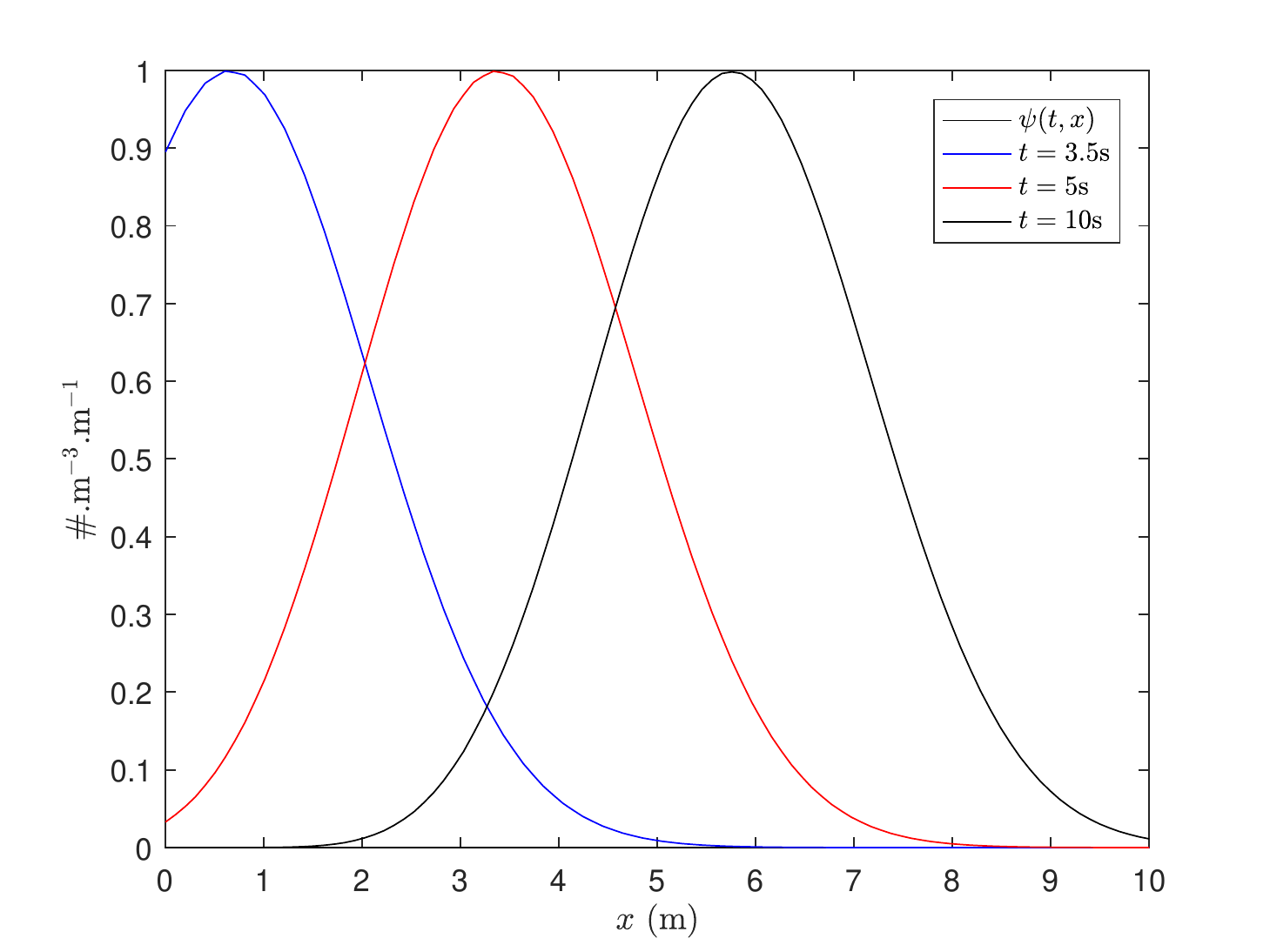}
    \caption{Simulation of the NDF $\psi$}
    \label{fig:sol}
    \end{subfigure}
    \caption{Numerical simulation of the process with $(\xmin, \xmax) = (0, 10)$, $(t_0, t_1) = (0, 10)$ and $N_x = N_t = 100$.}
    \label{fig:all_sim}
\end{figure}




\subsection{Step 1: reconstruction of a function of the state}

Following the methodology developed in Section \ref{secluen}, we first try to estimate the function $\TR_\lambda(\etat)$ of the state via the dynamical system \eqref{eq_ObsCrist} for some fixed negative values of $\lambda$.
\startlucas
All along the simulation of \eqref{systbilan}, we compute $\mes$ and estimate the solution of \eqref{edpa} via the method of characteristics.
\stoplucas
We integrate the solution of \eqref{eq_ObsCrist} with the first order Euler's method.
\stoplucas
Then we plot the evolution of the relative error between $z$ and $\TR_\lambda(\etat)$ in Fig.~\ref{fig:func_state} for some values of $\lambda$. One can check that the error goes to zero as $t\to+\infty$. Moreover, the bigger is $|\lambda|$, the faster is the convergence. This is due to the exponential convergence of $z-\TR_\lambda(\etat)$ to zero given by \eqref{eq_vitconv}.
Hence, we are able to approximate any function $\TR_\lambda(\etat)$ of the state.
Now, we can move to the second part of the methodology of Section \ref{secluen}.
\begin{figure}[htbp]
\centering
\begin{tabular}{r@{}l}
\includegraphics[width = 0.5\textwidth]{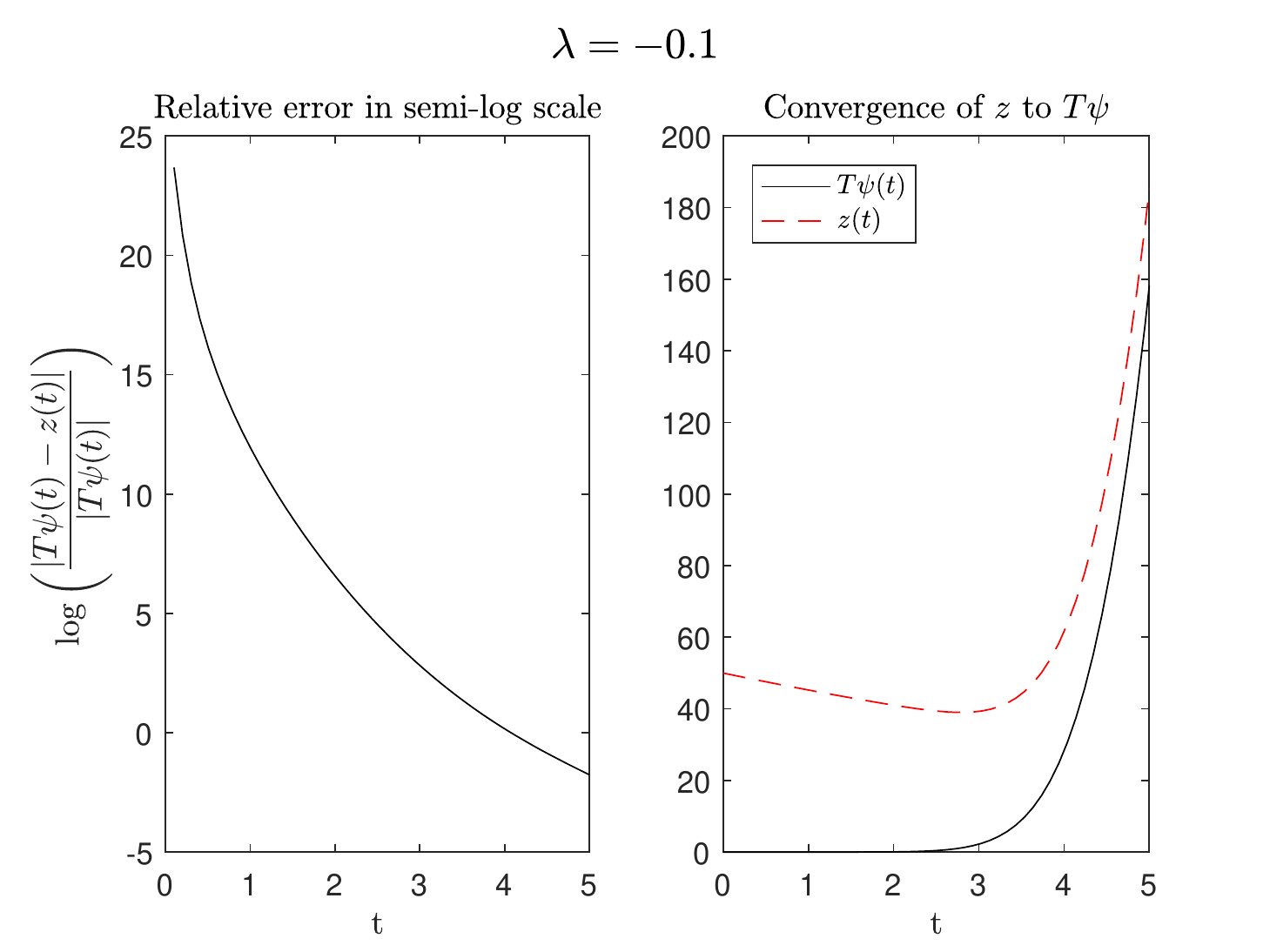}&
\includegraphics[width = 0.5\textwidth]{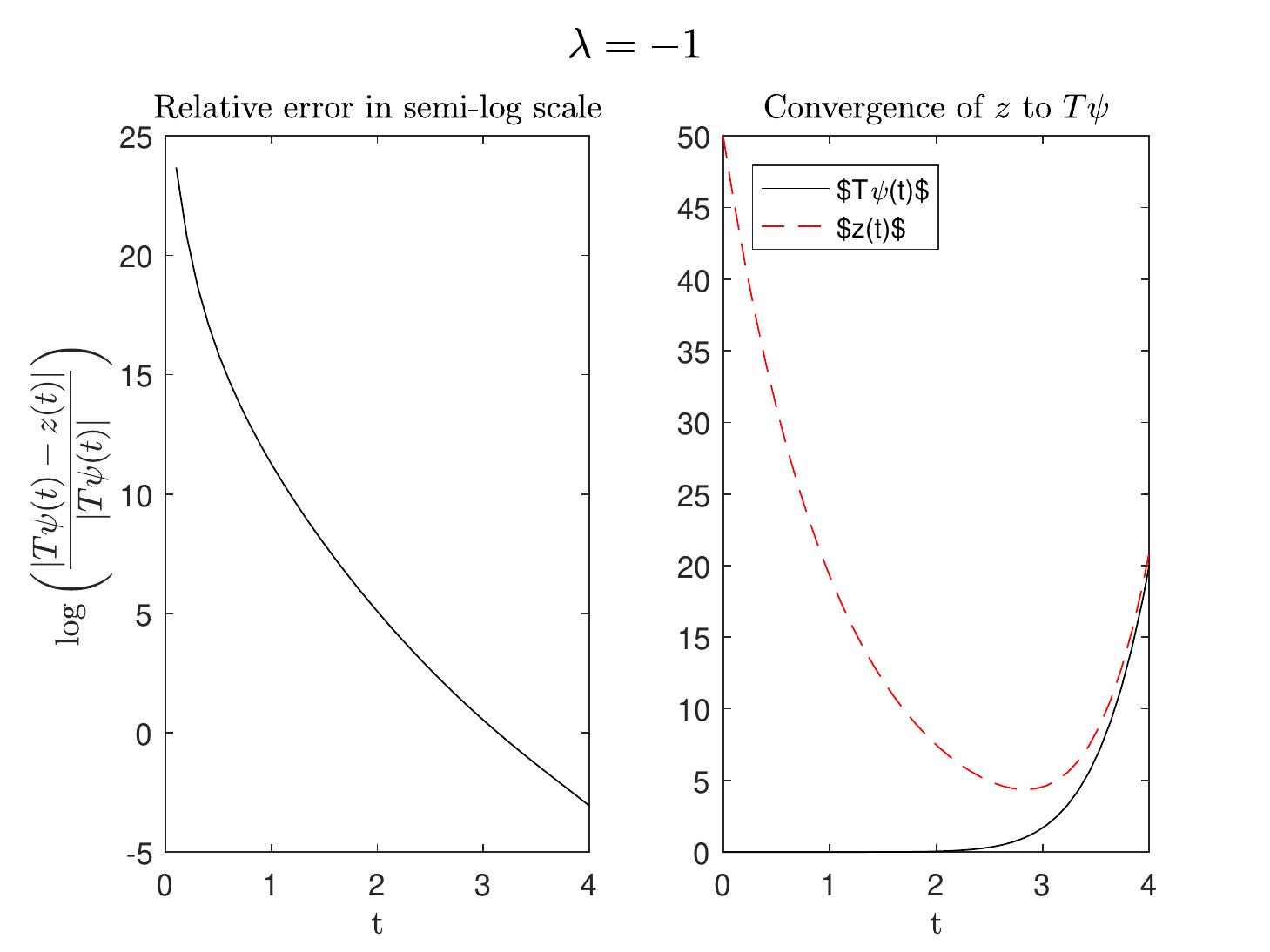}\\
\includegraphics[width = 0.5\textwidth]{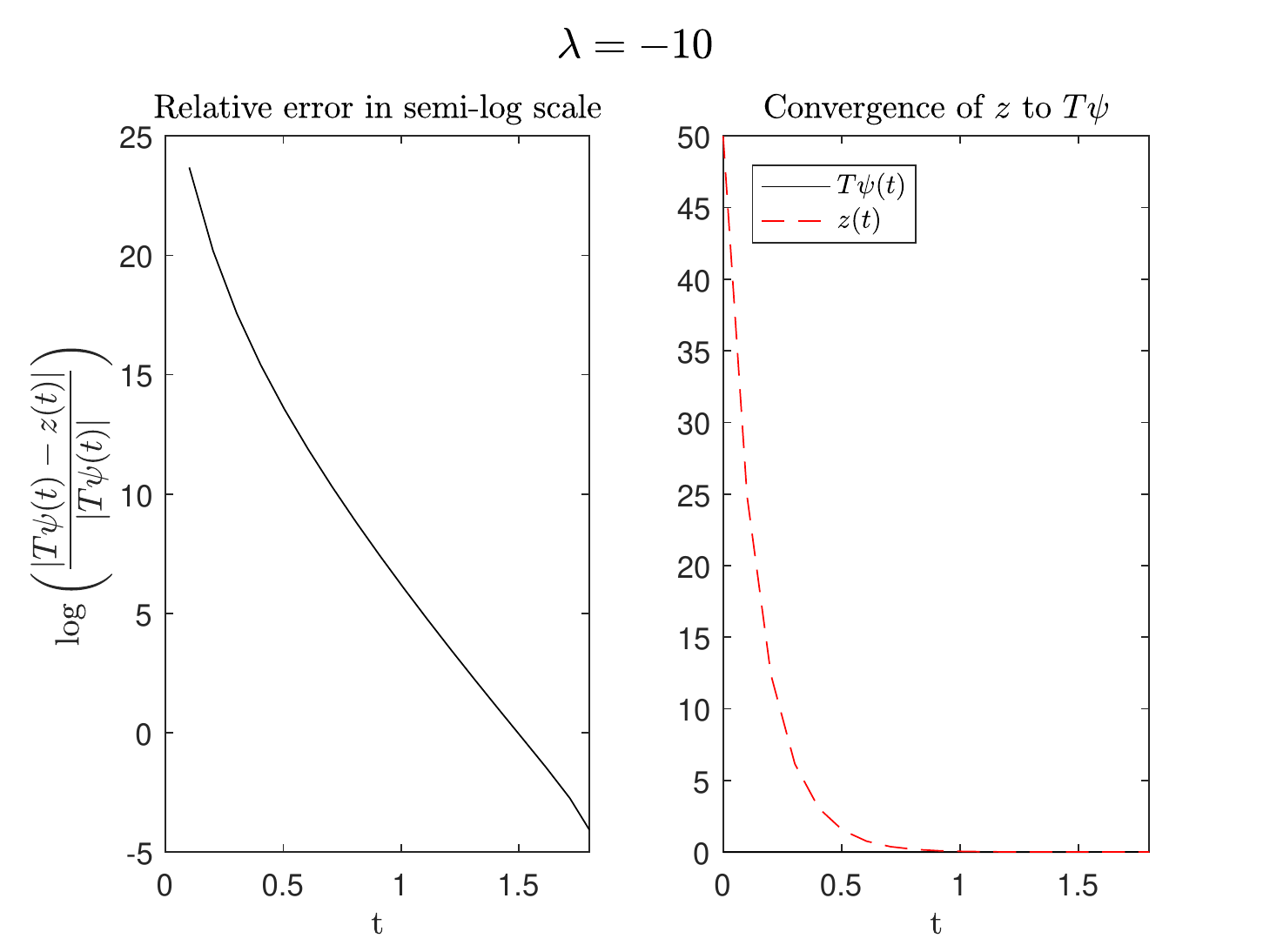}&
\includegraphics[width = 0.5\textwidth]{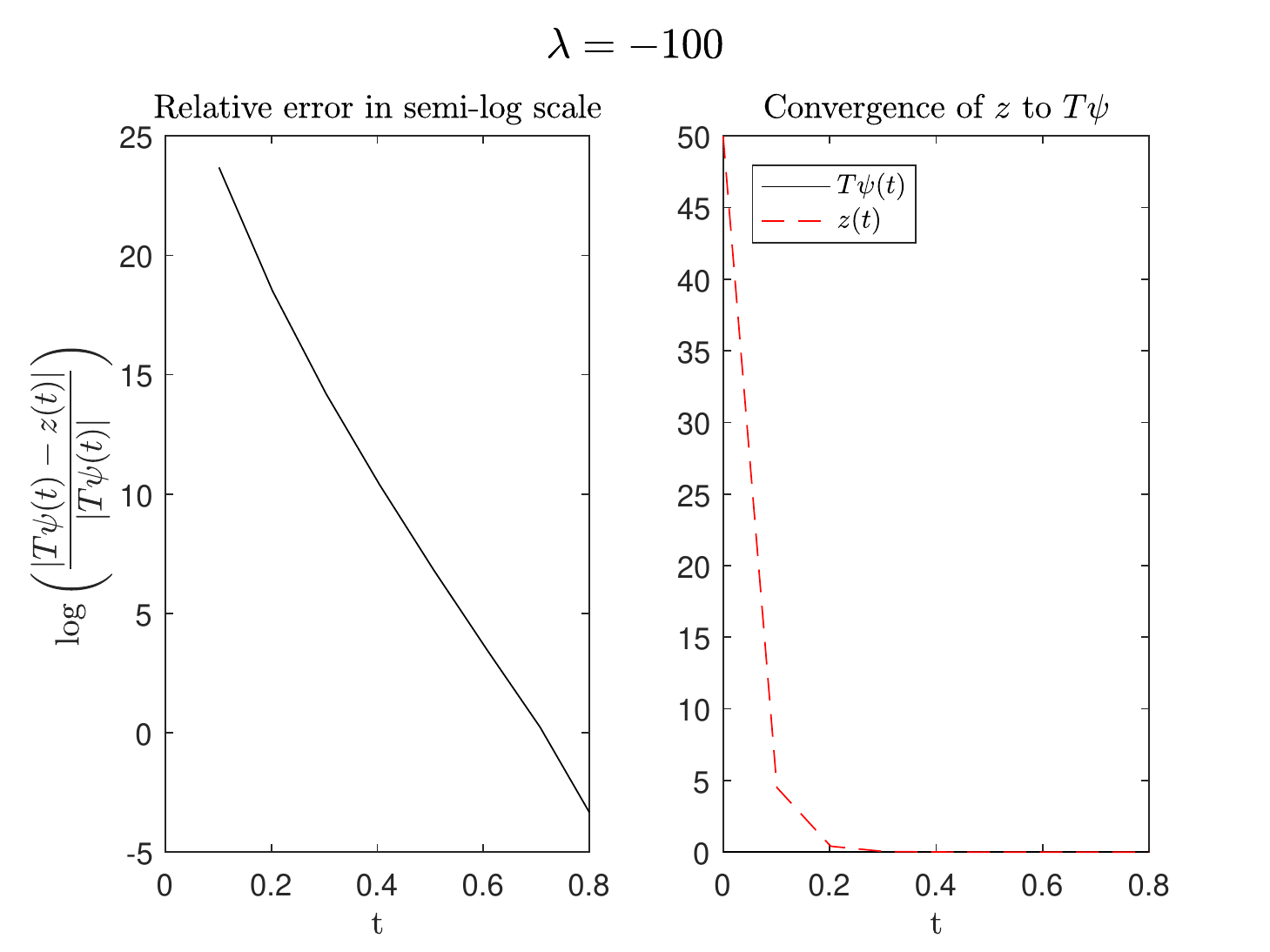}
\end{tabular}
\caption{\small Convergence of $\TR_\lambda(\etat) - z$ to zero for different values of $\lambda$.
We choose $z(0) \neq 0$ arbitrarily. The bigger is $|\lambda|$, the faster is the convergence. By means of a linear regression, one can estimate the convergence rate of the relative error to zero:
$
\mathcal{O}(e^{-7.4t})\text{ if } \lambda = -0,1,\quad
\mathcal{O}(e^{-8.2t})\text{ if } \lambda = -1,\quad
\mathcal{O}(e^{-14.2t})\text{ if } \lambda = -10,\quad
\mathcal{O}(e^{-32.9t})\text{ if } \lambda = -100.
$
}
\label{fig:func_state}
\end{figure}



\subsection{Step 2: reconstruction of the entire state of the system}

Following Step 1, we estimate simultaneously numerous functions $\TR_{\lambda_i}(\etat)$ which correspond to different values $\lambda_i <0$.
\startlucas
These estimations are denoted $z_{\lambda_i}$.
The aim of this section is to estimate the state $\psi$ from the knowledge of $(z_{\lambda_i})_{1\leq i\leq p}$.

Then, we choose a regularization parameter $\delta>0$ and solve the discrete version of the quadratic minimization problem \eqref{pb:min_well} at each time step, that is for each time $\mathbb{t_k}$, find $\big(\psi_{j, k}\big)_{1\leq j\leq N_x}$ that
\begin{equation}
    \textrm{minimize~} \left\|\Delta x \big(a_{i, j, k}\big)_{1\leq i\leq p, 1\leq j\leq N_x}\ \cdot\ \big(\psi_{j, k}\big)_{1\leq j\leq N_x} - \big(z_{\lambda_i}(\mathbb{t}_k)\big)_{1\leq i\leq p}\right\|^2+ \delta \left\| \big(\psi_{j, k}\big)_{1\leq j\leq N_x}\right\|^2.
\end{equation}

This is a quadratic minimization problem, which we solve via an interior-point method (see \emph{e.g.} \cite[Chapter III.11]{BV}.
We need to fix an initial condition to apply this algorithm.
Following a continuation method, we choose as an initial condition at time $\mathbb{t}_k$ the minimum value obtained at time $\mathbb{t}_{k-1}$, transported during a time $\Delta t$ at speed $G(\mathbb{t}_{k-1})$.

The choice of parameters $p$, $\lambda_1,\dots,\lambda_p$ and $\delta$ and their influence are investigated in the paragraphs below.
\begin{itemize}
    
    \item \textbf{Choice of the $p$ and $(\lambda_i)$.}
Note that the matrix $\big(a_{i, j, k}\big)_{1\leq i\leq p, 1\leq j\leq N_x}$ may be injective only if $p\geq N_x$, that is if the discretization in $\lambda$ is thinner than in $x$. Therefore, we fix $p= 2N_x = 200$.
Moreover, even if the matrix $(a_{i, j, k})_{i, j}$ is injective, a regularization method is needed to left-inverse it. Indeed, for all $t\in(t_0, t_1)$, the operator
$$L^2(\xmin, \xmax)\ni \psi\mapsto \left(\lambda\mapsto \int_{\xmin}^{\xmax} a_{\lambda}(t, x) \psi(x)\dd x \right)\in L^2(\lambda_{\min}, \lambda_{\max})$$
is compact (as an integral operator).
Hence, even if it is injective, its inverse is not continuous.
The matrix $(a_{i, j, k})_{i, j}$ is a discretization of this operator. Then, the more the discretization is thinner, the more it is ill-conditioned. This emphasizes the necessity of using a regularization method.

In Fig.~\ref{fig:lambda}, we plot the estimation of the NDF for different values of $(\lambda_i)$.
For large values of $|\lambda|$, $z$ converges quickly to $\TR\psi$. However, it appears that functions $a_\lambda$ carry less information for large values of $|\lambda|$, so that the map $\TR$ is more difficult to inverse. This explains Fig.~\ref{fig:llarge}, on which the estimation $\hat\psi$ is worst than on Fig.~\ref{fig:ltot}.

On the contrary, for small values of $|\lambda|$, it seems that functions $a_\lambda$ carry more information, since the estimation $\hat\psi$ is similar on Fig.~\ref{fig:lsmall} and Fig.~\ref{fig:ltot} at $t=10$ s. However, we also see a peaking phenomenon (for $t\leq5$ s on Fig.~\ref{fig:lsmall}), due to the fact that $z$ is slower to converge to $\TR\psi$ than for large values of $|\lambda|$.

Thus, one must find a compromise for the choice of $(\lambda_i)$: take large values for fast convergence and avoiding peaking, and small values for efficient estimation.

\begin{figure}[htbp]
\begin{subfigure}{.5\linewidth}
\includegraphics[width = \linewidth]{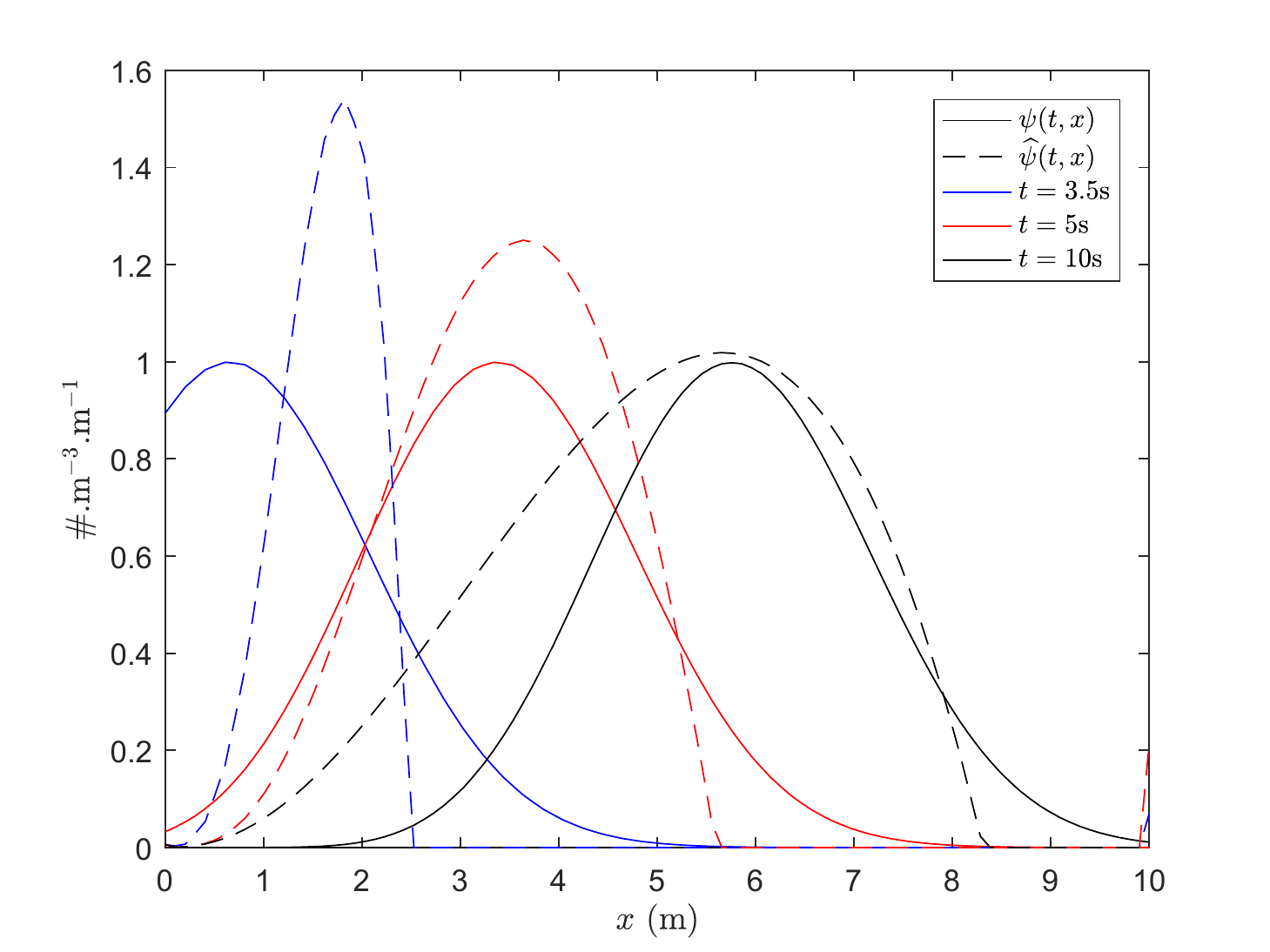}
\caption{$(\lambda_i)\subset[-10, -1]$}
\label{fig:lsmall}
\end{subfigure}
\begin{subfigure}{.5\linewidth}
\includegraphics[width = \linewidth]{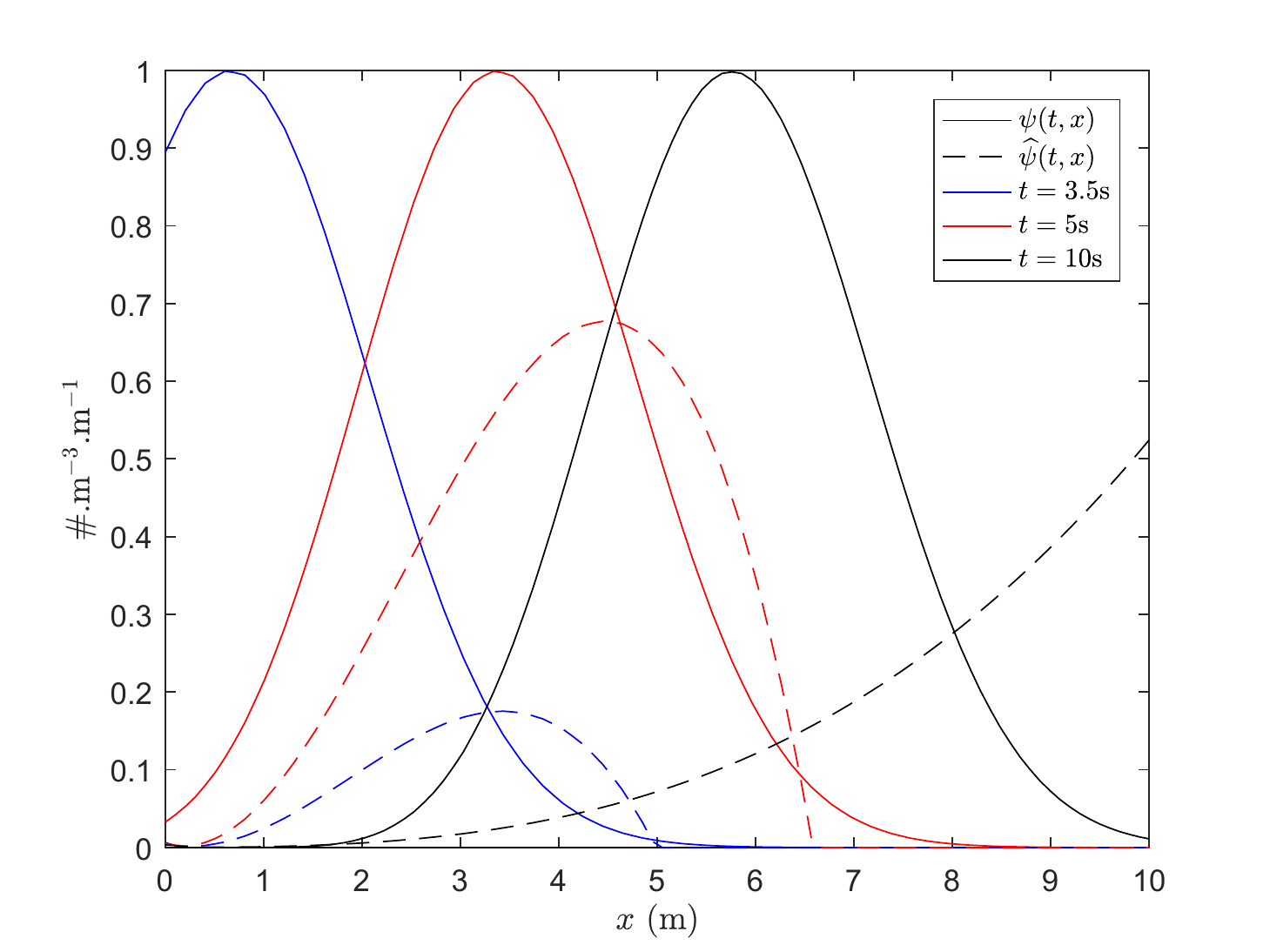}
\caption{$(\lambda_i)\subset[-100, -10]$}
\label{fig:llarge}
\end{subfigure}\par\medskip
\centering
\begin{subfigure}{.5\linewidth}
\includegraphics[width = \linewidth]{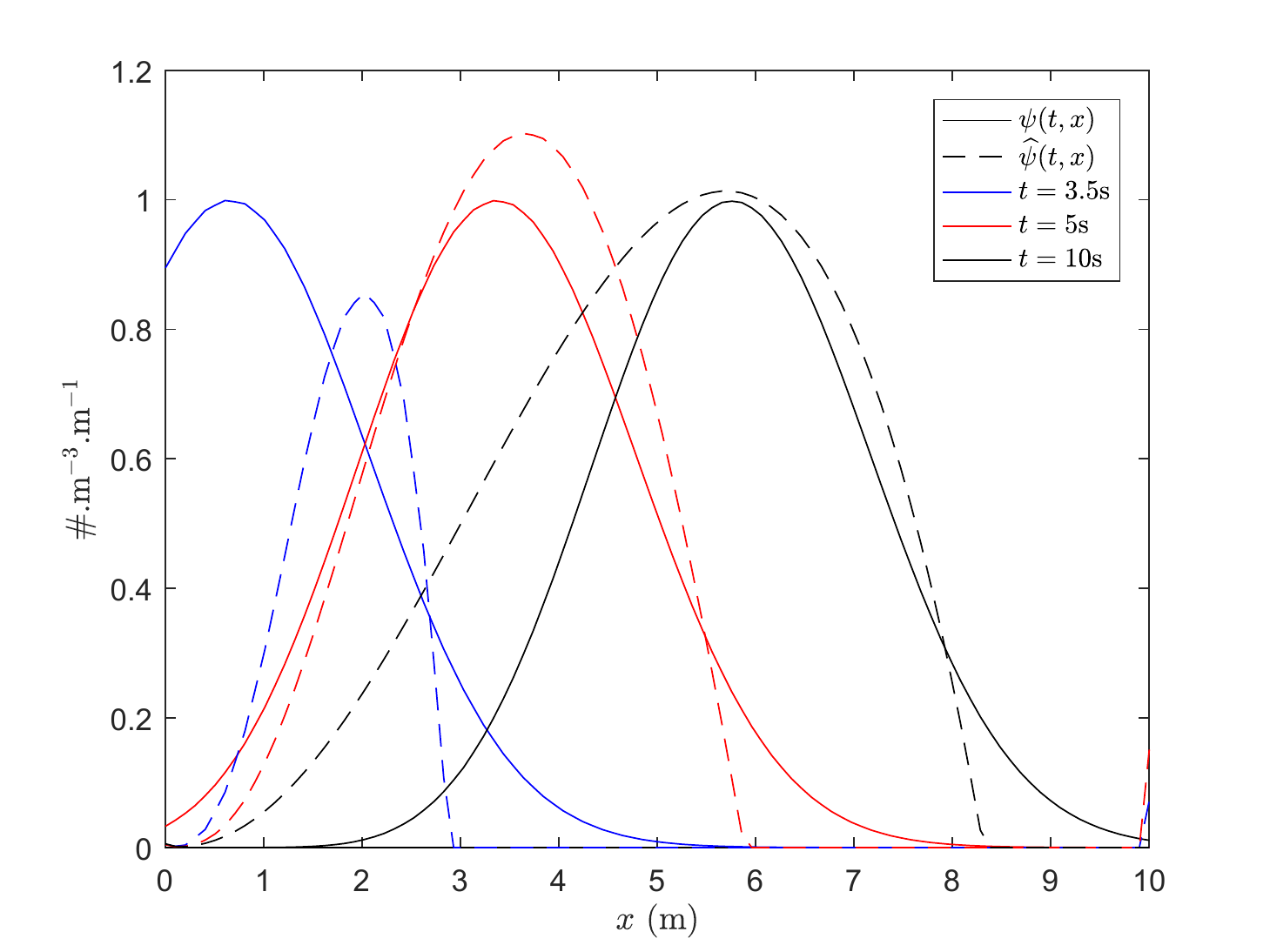}
\caption{$(\lambda_i)\subset[-100, -1]$}
\label{fig:ltot}
\end{subfigure}
\caption{Influence of $(\lambda_i)_{1\leq i\leq p}$ on the reconstruction of the CSD.}
\label{fig:lambda}
\end{figure}

    \item \textbf{Choice of the regularization parameter $\delta$.}
    
\end{itemize}

The regularization parameter $\delta$ must be chosen numerically, in order to find a compromise between the minimization of the norm of the state, and the minimization of the gap $\TR\psi - z$.
This compromise can be interpreted as a measurement reliability. Indeed, if the measurement has a small uncertainty, then we choose a small $\delta$. On the contrary, if the measurement is highly uncertain, then we fix a large value of $\delta$ in order to regularize the solution. 

In Fig.~\ref{fig:obs}, we plot the actual NDF $\etat$ and its estimation $\hat\etat$ at different times, for different values of $\delta$, and with or without measurement noise.
Measurement noise is fixed at $2\%$ of the maximal value of the output on the time interval.
For small values of $\delta$ and/or with measurement noise, we see that a peaking phenomenon appear: this is due to a lack of regularization of the solution.
On the contrary, if $\delta$ is too large, then the minimization of the norm of the state takes too much importance in the minimization problem, and $\hat\psi$ is too attenuated.

\stoplucas




\begin{figure}[htbp]
\centering
\begin{tabular}{r@{}l}
\subcaptionbox{$\delta=0.05,\ \alpha = 0$}{
\includegraphics[width = 0.5\textwidth]{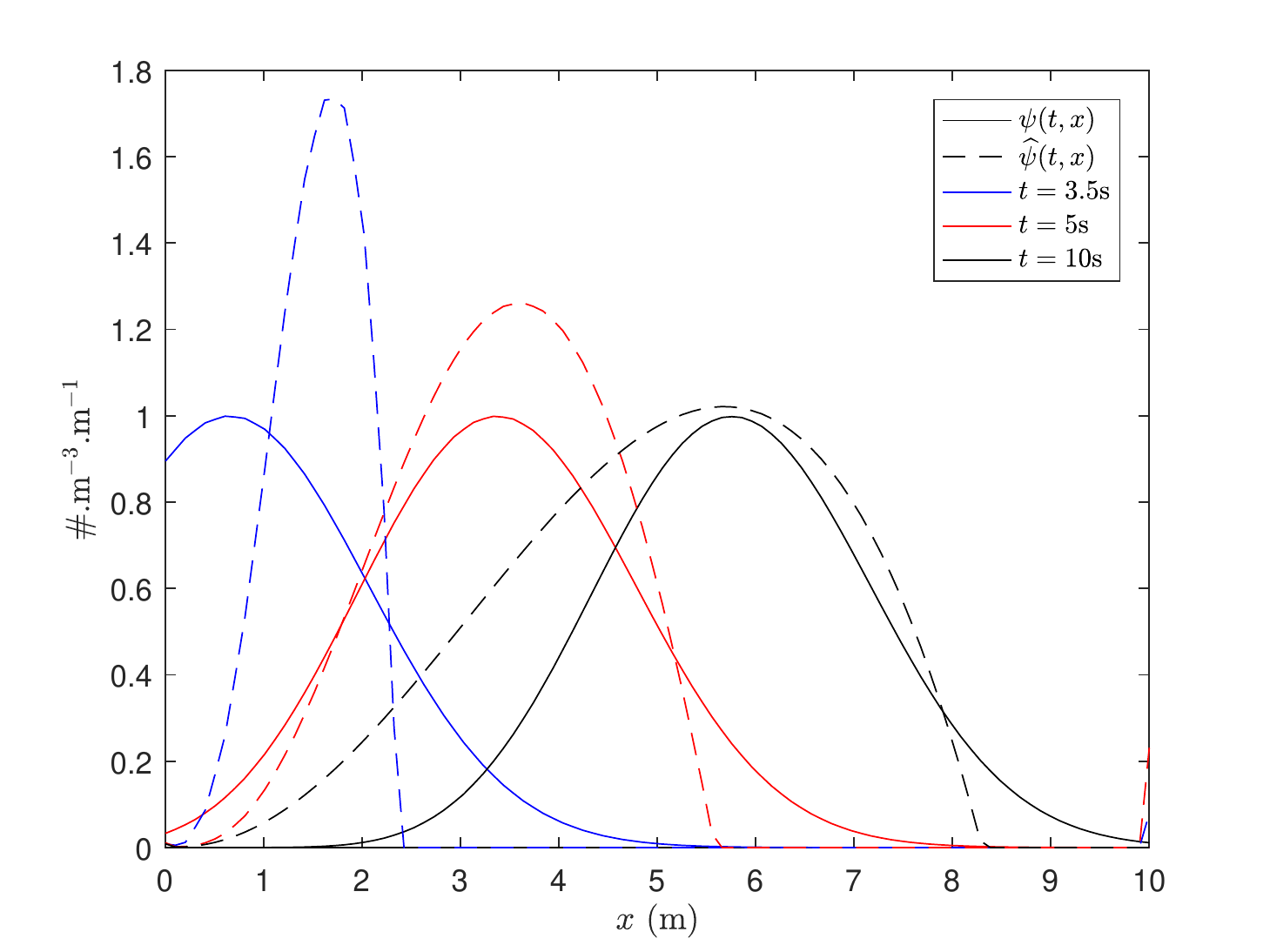}}&
\subcaptionbox{$\delta=0.05,\ \alpha = 2\%$}{
\includegraphics[width = 0.5\textwidth]{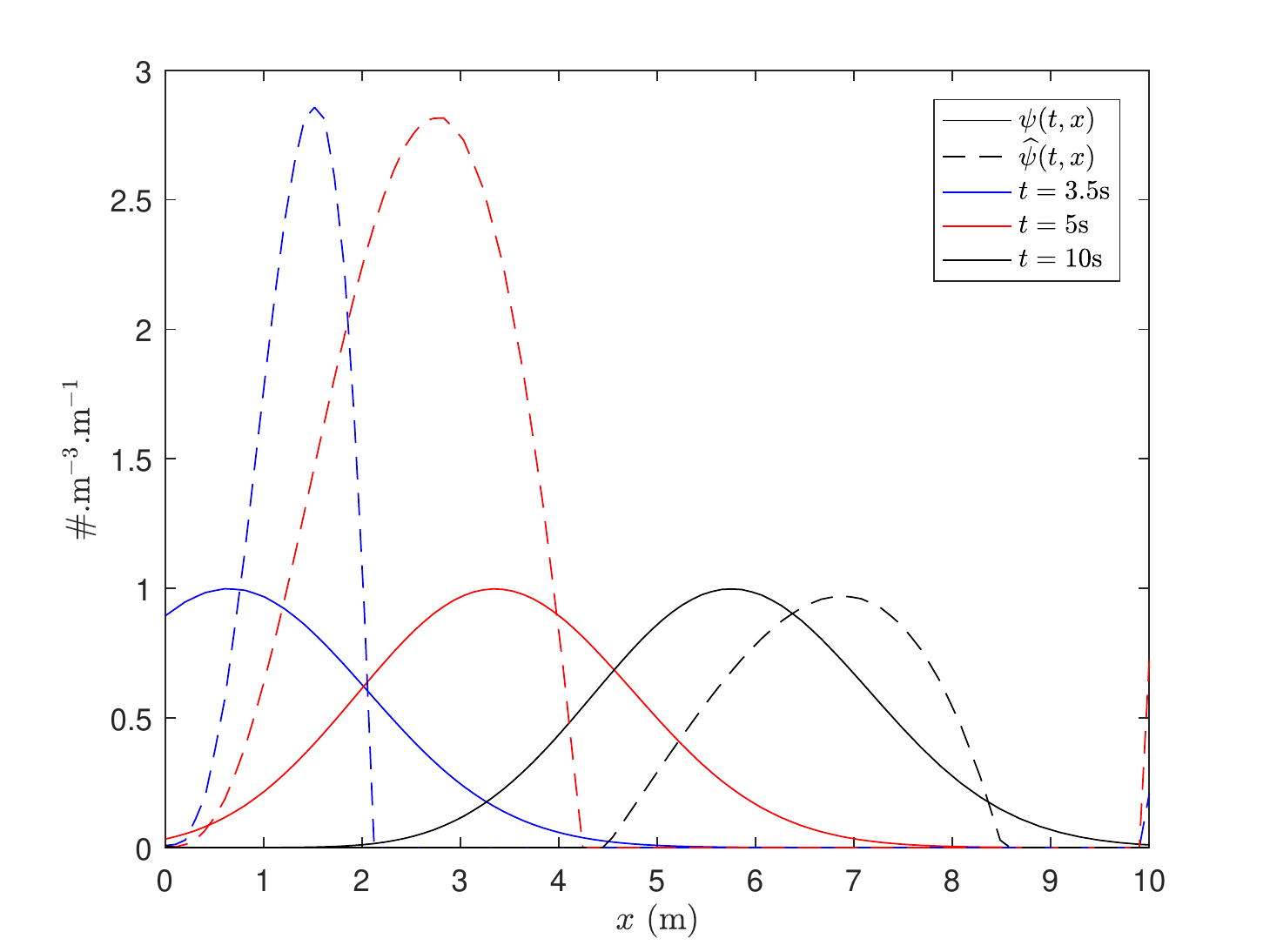}}\\
\subcaptionbox{$\delta=0.1,\ \alpha = 0$}{
\includegraphics[width = 0.5\textwidth]{delta01lambda1100bruit0.pdf}}&
\subcaptionbox{$\delta=0.1,\ \alpha = 2\%$}{
\includegraphics[width = 0.5\textwidth]{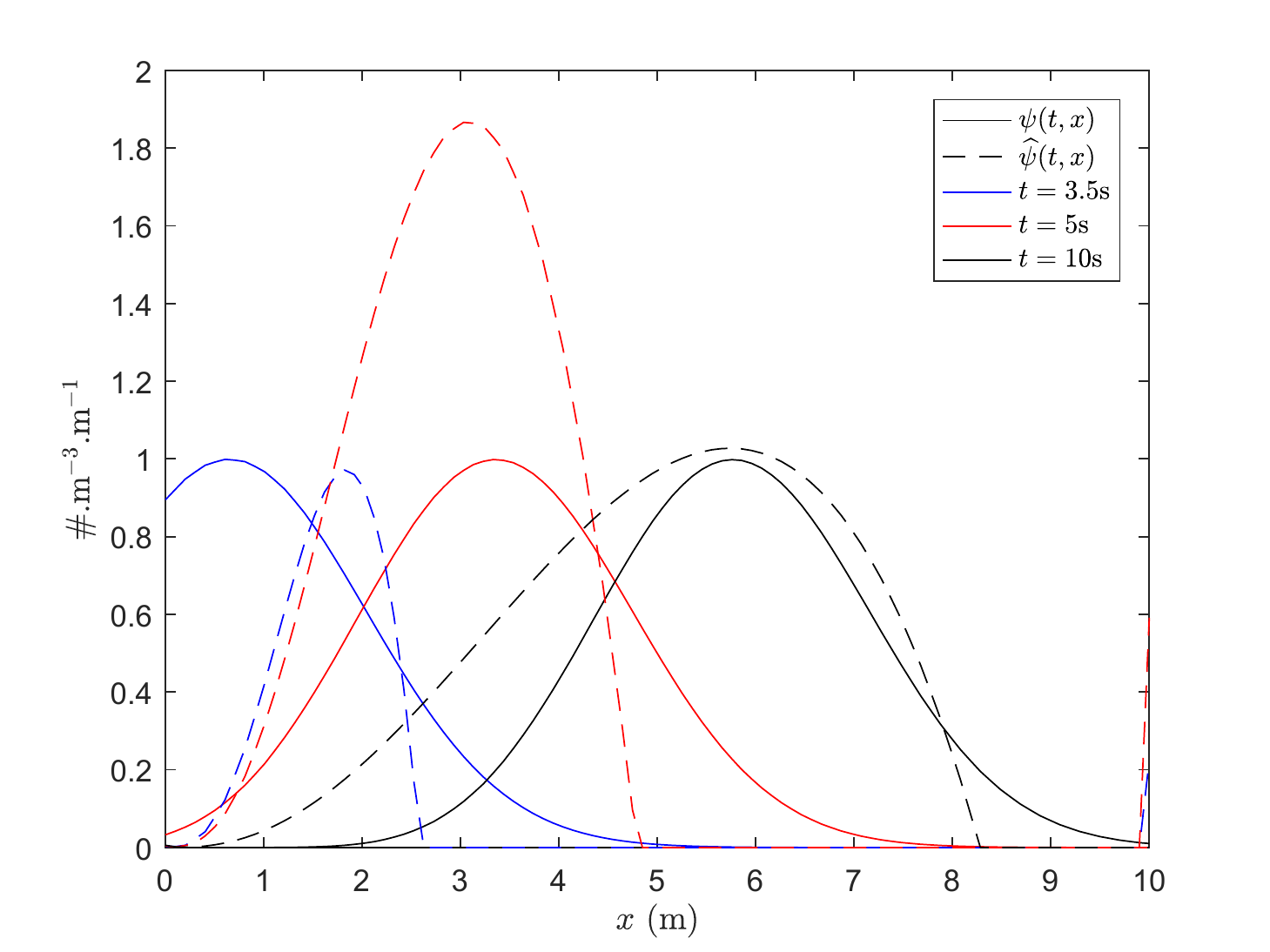}}\\
\subcaptionbox{$\delta=0.2,\ \alpha = 0$}{
\includegraphics[width = 0.5\textwidth]{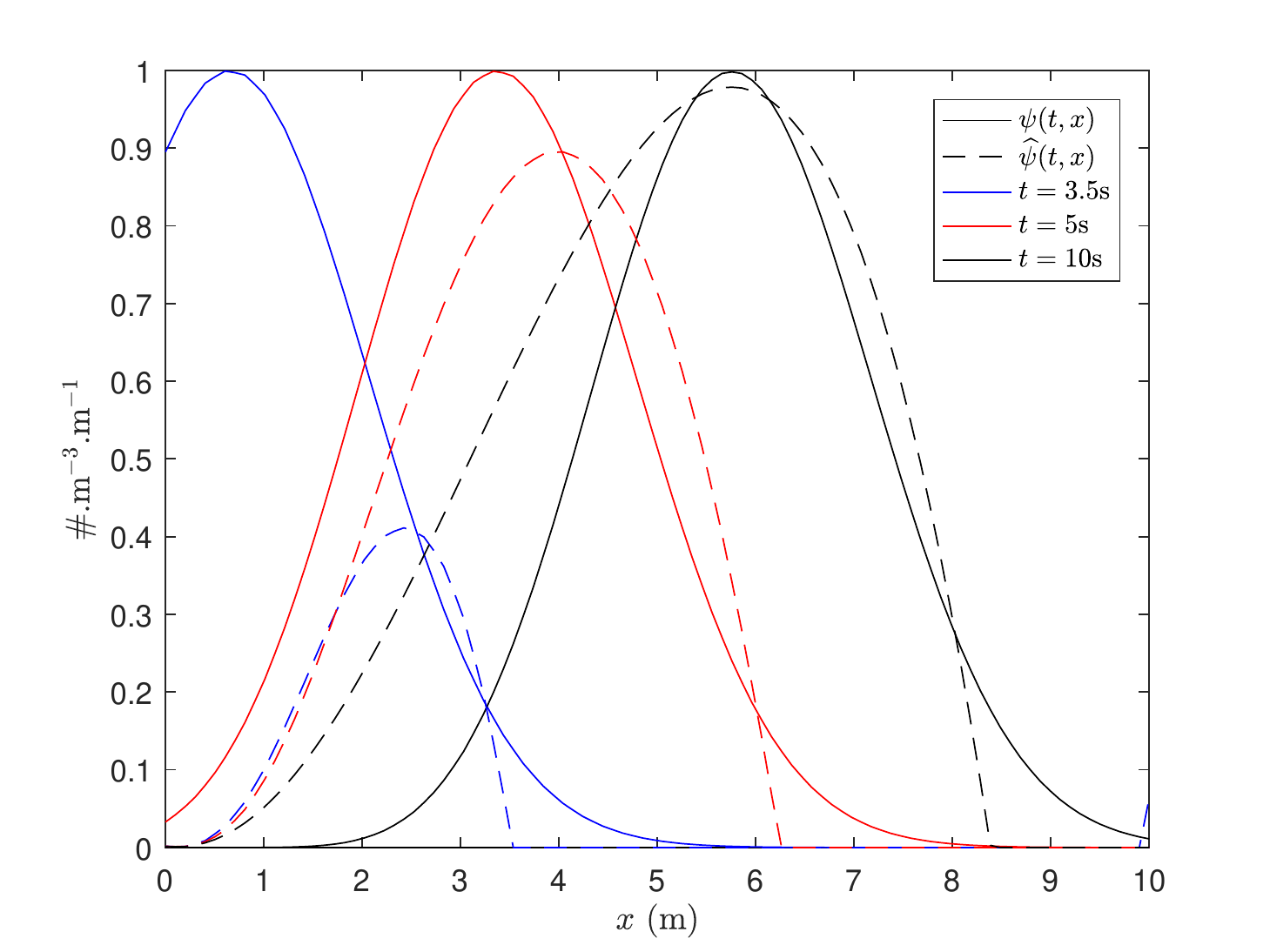}}&
\subcaptionbox{$\delta=0.2,\ \alpha = 2\%$}{
\includegraphics[width = 0.5\textwidth]{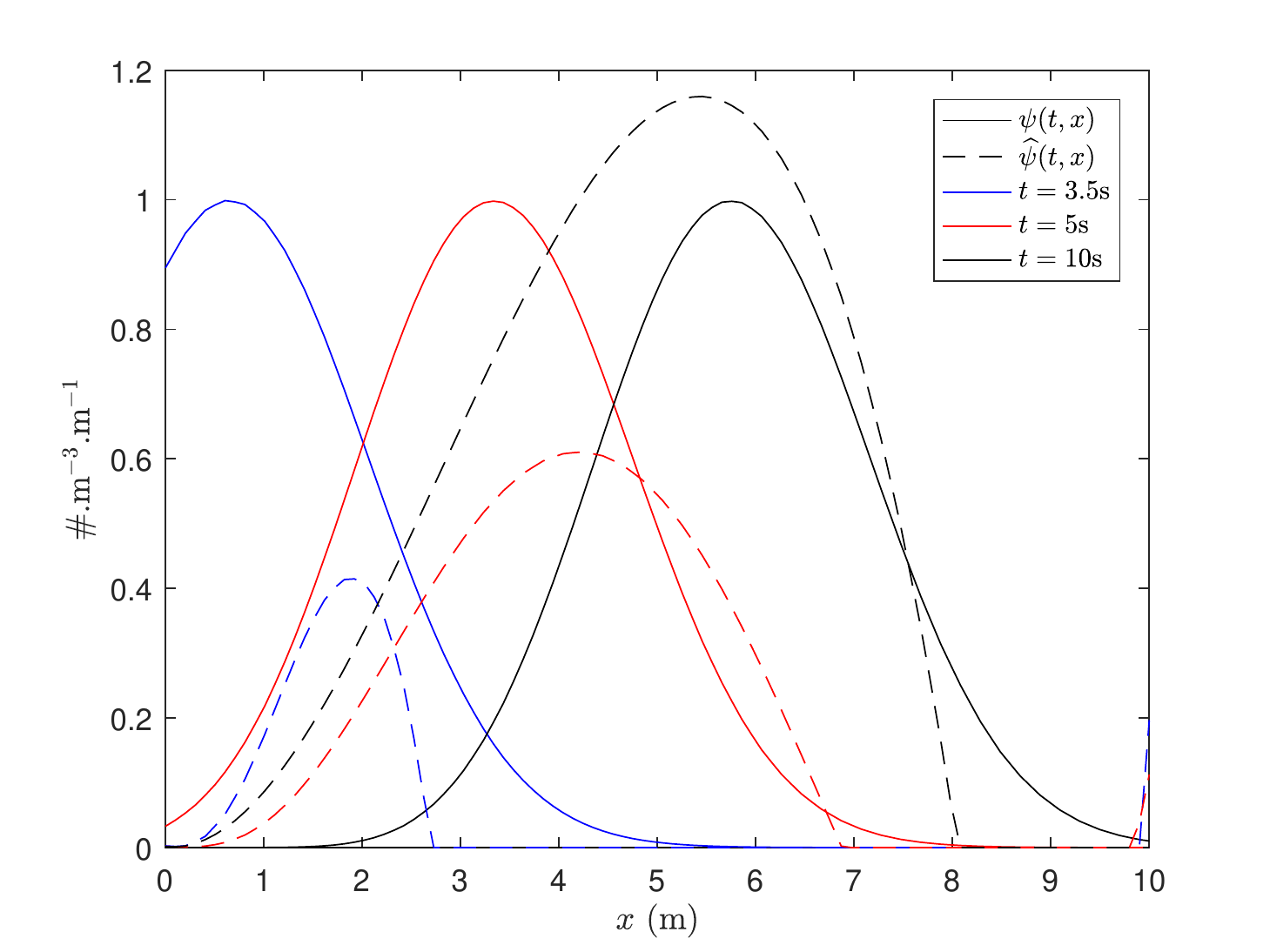}}
\end{tabular}
\caption{Influence of the regularization parameter $\delta$ and measurement noise $\alpha$ on the reconstruction of the CSD}
\label{fig:obs}
\end{figure}

\section{Conclusion}

In this paper a new observer has been introduced to estimate the Crystal Size Distribution
\startlucas
from the measurements of the solute concentration, temperature and a model of the growth rate.
No model of the nucleation rate is needed.
\stoplucas
This approach is based on the use of Kazantzis-Kravaris/Luenberger observer and a Tikhonov regularization procedure.
The numerical results obtained are promising. 
Even though, the knowledge of the solute concentration alone does not allow to accurately reconstruct the full CSD as shown by our observability analysis, we believe that this tool could be used in addition with supplementary information given by other sensors (for instance \fbrmp).
This approach will be evaluated on experiments in a future research project.

\section*{Appendix: proofs of Propositions \ref{prop_pos} and \ref{prop_neg}.}
\startlucas
Assume that $\vit\geq\mu>0$.
Let $\tau\in(t_0, t_1]$ and $u\in H^4(t_0, \tau)$.
Let $\etat$ be the solution of \eqref{systbilan} with initial condition $\etat_0$ and boundary condition $\cont$.
We introduce a time reparametrization $\Tilde{t} = \int_{t_0}^tG(s)\dd s$, which is well defined since $\vit\geq\mu$.
Let $\Tilde{\psi}$, $\Tilde{\cont}$ and $\Tilde{\mes}$ be such that
$\Tilde{\psi}(\Tilde{t}) = \psi(t)$, $\Tilde{\cont}(\Tilde{t}) = \cont(t)$ and $\Tilde{\mes}(\Tilde{t}) = \mes(t)$ for all $t\in[t_0, t_1]$. Then
\begin{equation}
\begin{aligned}
\begin{cases}
\partial_{\Tilde{t}} \Tilde{\etat}(\Tilde{t}, x) = -\partial_x \Tilde{\etat}(\Tilde{t}, x)\\
\Tilde{\etat}(t_0, x) = \etat_0(x)\\
\Tilde{\etat}(\Tilde{t}, \xmin) = \Tilde{\cont}(\Tilde{t})
\end{cases}
\end{aligned}
\label{syst_vit1}
\end{equation}
and $\Tilde{\mes}(\Tilde{t}) = \int_{\xmin}^{\xmax}\Tilde{\psi}(\Tilde{t}, x)x^3\dd x$.
Since the observability properties are not affected by the time reparametrization, one can investigate observability properties of the system \eqref{syst_vit1} instead of \eqref{systbilan}.
Therefore, one can assume without loss of generality that $\vit=1$ in the rest of the proof.
Since $\cont\in H^4(t_0, \tau)$, we have $\mes\in C^4(t_0, \tau)$.
Equation \eqref{eq_EtatMax} and system \eqref{systbilan} yield
\begin{align}
\mes'
&= 3 \int_{x_{\min}}^{x_{\max}}x^2\etat(\cdot, x)\dd x -  \left[x^3\etat(\cdot, x)\right]_{x_{\min}}^{x_{\max}}
\nonumber\\
&= 3 \int_{x_{\min}}^{x_{\max}}x^2\etat(\cdot, x)\dd x +  x_{\min}^3\cont,
\label{E:y(1)}
\\
\mes^{(2)}
&= 6 \int_{x_{\min}}^{x_{\max}}x\etat(\cdot, x)\dd x + 3 \left[x^2\etat(\cdot, x)\right]_{x_{\min}}^{x_{\max}}
\startlucas
+
\stoplucas
 x_{\min}^3\cont' \nonumber\\
&= 6 \int_{x_{\min}}^{x_{\max}}x\etat(\cdot, x)\dd x + 3 x_{\min}^2\cont +  x_{\min}^3\cont',
\label{E:y(2)}
\\
\mes^{(3)}
&= 6\int_{x_{\min}}^{x_{\max}}\etat(\cdot, x)\dd x - 6 \left[x\etat(\cdot, x)\right]_{x_{\min}}^{x_{\max}} + 3\ x_{\min}^2\cont' + x_{\min}^3\cont^{(2)} \nonumber\\
&= 6\int_{x_{\min}}^{x_{\max}}\etat(\cdot, x)\dd x + 6x_{\min}\cont + 3 x_{\min}^2\cont' +  x_{\min}^3\cont^{(2)},
\label{E:y(3)}
\\
\mes^{(4)}
&= - 6 \left[\etat(\cdot, x)\right]_{x_{\min}}^{x_{\max}} + 6x_{\min}\cont' + 3 x_{\min}^2\cont^{(2)}(t) + x_{\min}^3\cont^{(3)} \nonumber\\
&= 6\cont + 6x_{\min}\cont' + 3 x_{\min}^2\cont^{(2)} +  x_{\min}^3\cont^{(3)}.
\label{E:y(4)}
\end{align}

\medskip
\begin{proof}[End of the proof of Proposition \ref{prop_pos}]
By hypothesis, $\etat_0=0$.
Consequently, Equations \eqref{E:y(1)}--\eqref{E:y(3)} yield
\begin{align*}
\begin{cases}
\mes'(t_0) = x_{\min}^3\cont(t_0)\\
\mes^{(2)}(t_0) = 3 x_{\min}^2\cont(t_0) + x_{\min}^3\cont'(t_0)\\
\mes^{(3)}(t_0) = 6 x_{\min}\cont(t_0) + 3 x_{\min}^2\cont'(t_0) +x_{\min}^3\cont^{(2)}(t_0),
\end{cases}
\end{align*}
which is a triangular system with non vanishing diagonal since $\xmin>0$. Hence $\cont(t_0)$, $\cont'(t_0)$ and $\cont^{(2)}(t_0)$ are determined by $\mes$.
Moreover, on $[t_0, \tau]$, $\cont$ satisfies Equation \eqref{E:y(4)} which is a 3rd order ordinary differential equation.
Hence, according to the Cauchy-Lipschitz theorem, there exits a unique solution $\cont$ to this problem.
Thus $\mes$ determines $\cont$ uniquely, that $\cont\mapsto\mes$ is injective.
\end{proof}

\begin{proof}[End of the proof of Proposition \ref{prop_neg}]
Substituting the boundary condition in equation \eqref{E:y(4)} with  $u=0$ yields $\mes^{(4)} = 0$ identically on $[t_0, \tau]$.
Hence $\mes$ is a polynomial function of degree less or equal than 3.
Thus the linear function that maps any solution of \eqref{systbilan} with null boundary condition to its third moment has rank 4. Since $\etat$ lies in an infinite dimensional vector space, we get by the rank-nullity theorem that its kernel is non-trivial, \emph{i.e.} the state-output map $\etat\mapsto\mes$ is not injective, and the system has a 4-dimensional observable part.
\end{proof}

Note that Proposition \ref{prop_neg} relies deeply on Hypothesis \eqref{eq_EtatMax}.
Hence the non-injectivity of the measurement is due to the fact that the system is observed on a too small time interval.
If the system was observed on $[t_0, +\infty)$, then one could show with similar argues an injectivity result.
\stoplucas

\bibliographystyle{abbrv}
\bibliography{Biblio}

\end{document}